\definecolor{dblue}{RGB}{0,0,178}
\definecolor{dgreen}{RGB}{31,183,41}
\newcommand*{\GCH}{\mathsf{GCH}}
\newcommand*{\ZFC}{\mathsf{ZFC}}
\newcommand*{\power}{\mathscr{P}}
\newcommand*{\calA}{\mathcal{A}}
\newcommand*{\calF}{\mathcal{F}}
\newcommand*{\calI}{\mathcal{I}}
\newcommand*{\calU}{\mathcal{U}}
\newcommand*{\calV}{\mathcal{V}}
\newcommand*{\bbP}{\mathbb{P}}
\newcommand*{\bbQ}{\mathbb{Q}}
\newcommand*{\bbR}{\mathbb{R}}
\newcommand*{\ddbbP}{\dot{\mathbb{P}}}
\newcommand*{\ddbbQ}{\dot{\mathbb{Q}}}
\newcommand*{\ddbbR}{\dot{\mathbb{R}}}
\newcommand*{\1}{\mathds{1}}
\DeclareMathOperator{\Add}{Add}
\DeclareMathOperator*{\bigast}{\resizebox{0.9em}{!}{$\ast$}}
\DeclareMathOperator{\cf}{cf}
\DeclareMathOperator{\crit}{crit}
\DeclareMathOperator{\dom}{dom}
\DeclareMathOperator{\Fn}{Fn}
\DeclareMathOperator{\id}{id}
\DeclareMathOperator{\Ord}{Ord}
\DeclareMathOperator{\Ult}{Ult}
\newcommand*{\ddA}{\dot{A}}
\newcommand*{\ddF}{\dot{F}}
\newcommand*{\ddx}{\dot{x}}
\newcommand*{\abs}[1]{\lvert#1\rvert}
\newcommand*{\SetSymbol}[1][]{%
	\;#1\vert\;
	\allowbreak
	\mathopen{}
}
\DeclarePairedDelimiterX{\Set}[1]{\{}{\}}{%
	\renewcommand\mid{\SetSymbol[\delimsize]}
	#1
}
\newcommand*{\tup}[1]{\langle#1\rangle}
\newcommand*{\comp}{\mathrel{\parallel}}
\newcommand*{\defeq}{\mathrel{\vcenter{\baselineskip0.5ex \lineskiplimit0pt
                     \hbox{\scriptsize.}\hbox{\scriptsize.}}}%
                     =}
\newcommand*{\forces}{\mathrel{\Vdash}}
\newcommand*{\iter}{\mathbin{\ast}}
\newcommand*{\mitchless}{\mathrel{\triangleleft}}
\newcommand{\premin}[1]{{}^-{#1}}
\newcommand*{\res}{\nobreak\mskip2mu\mathpunct{}\nonscript
  \mkern-\thinmuskip{\upharpoonright}\mskip6muplus1mu\relax} 
\newcommand*{\stfd}[1]{\Set{\tup{f,\delta,#1}}}
\newcommand*{\vphi}{\varphi}
\newcommand*{\hi}{\hat{\imath}}
\newcommand*{\hj}{\hat{\jmath}}
\newcommand*{\tj}{\tilde{\jmath}}
\newtheoremstyle{boldrk}
  {}{}
  {}{}
  {\bfseries}{.}
  {5pt plus 1pt minus 1pt}{}
\theoremstyle{plain}
\newtheorem{thm}{Theorem}[section]
\newtheorem{claim}{Claim}[thm]
\newtheorem{cor}[thm]{Corollary}
\newtheorem{lem}[thm]{Lemma}
\newtheorem{prop}[thm]{Proposition}
\newtheorem*{factstar}{Fact}
\newtheorem{thmalph}{Theorem} 
\newtheorem{thmalphintro}{Theorem} 
\theoremstyle{boldrk}
\newtheorem{qn}[thm]{Question}
\newtheorem*{rk}{Remark} 
\theoremstyle{definition}
\newtheorem{defn}[thm]{Definition}
\newenvironment*{poc}[1][Proof of Claim]{\begin{proof}[#1]}{\end{proof}}
\title[Proper classes of maximal \(\theta\)-independent families]{Proper classes of maximal \(\theta\)-independent\\families from large cardinals}
\author{Calliope Ryan-Smith}
\email{c.Ryan-Smith@leeds.ac.uk}
\urladdr{https://academic.calliope.mx}
\address{School of Mathematics, University of Leeds, LS2 9JT, UK}
\date{\today{}}
\keywords{Maximal \(\sigma\)-independent family, strongly compact cardinal, large cardinals, Mitchell order}
\thanks{The author's work was financially supported by EPSRC via the Mathematical Sciences Doctoral Training Partnership [EP/W523860/1]. For the purpose of open access, the author has applied a Creative Commons Attribution (CC BY) licence to any Author Accepted Manuscript version arising from this submission. No data are associated with this article.}
\subjclass[2020]{Primary: 03E35; Secondary: 03E55, 03E05}
\begin{document}

\begin{abstract}
While maximal independent families can be constructed in \(\ZFC\) via Zorn's lemma, the presence of a maximal \(\sigma\)-independent family already gives an inner model with a measurable cardinal, and Kunen has shown that from a measurable cardinal one can construct a forcing extension in which there is a maximal \(\sigma\)-independent family. We extend this technique to construct proper classes of maximal \(\theta\)-independent families for various uncountable \(\theta\). In the first instance, a single \(\theta^+\)-strongly compact cardinal has a set-generic extension with a proper class of maximal \(\theta\)-independent families. In the second, we take a class-generic extension of a model with a proper class of measurable cardinals to obtain a proper class of \(\theta\) for which there is a maximal \(\theta\)-independent family.
\end{abstract}

\maketitle

\section{Introduction}
In 1983 Kunen published \cite{kunen_maximal_1983}, a paper exhibiting the equiconsistency of a single measurable cardinal and a single maximal \(\sigma\)-independent family. For an infinite cardinal \(\theta\) and an infinite set \(X\), \(\calA\subseteq\power(X)\) is \emph{\(\theta\)-independent} if \(\abs{\calA}\geq\theta\) and for all partial functions \(p\colon\calA\to2\) with \(\abs{p}<\theta\),
\begin{equation*}
\bigcap\Set*{A\mid p(A)=1}\cap\bigcap\Set*{X\setminus A\mid p(A)=0}\neq\emptyset,
\end{equation*}
where we say \(\sigma\)-independent to mean \(\aleph_1\)-independent. By Zorn's Lemma one obtains maximal \(\omega\)-independent families from \(\ZFC\) alone, but the existence of even a single \(\sigma\)-independent family entails an inner model with a measurable cardinal, a fascinating increase in consistency strength.

The proof that converts a measurable cardinal into a maximal \(\sigma\)-independent family can be extended to larger cardinal properties, something that was known at the time: In \cite{kunen_maximal_1983} Kunen comments that a single strongly compact cardinal \(\kappa\) would beget, in a forcing extension, maximal \(\sigma\)-independent families \(\calA\subseteq\power(\lambda)\) for all \(\lambda\) such that \(\cf(\lambda)\geq\kappa\). We shall prove this result whilst reducing the consistency strength requirement to \(\kappa\) merely being \(\aleph_1\)-strongly compact, and generalise the setting to \(\theta\)-independence.

\begin{thmalphintro}[{\cref{s:applications;ss:strongly-compact}}]
Let \(\kappa\) be a strong limit and \(\theta^+\)-strongly compact for some regular uncountable \(\theta<\kappa\), and let \(G\) be \(V\)-generic for \(\Add(\theta,\kappa)\). In \(V[G]\), for all \(\lambda\geq\kappa\) with \(\cf(\lambda)\geq\kappa\), there is a maximal \(\theta\)-independent family \(\calA\subseteq\power(\lambda)\).
\end{thmalphintro}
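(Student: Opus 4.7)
The plan is to take $\calA$ to be the family of Cohen subsets of $\theta$ added by the generic, regarded as subsets of $\lambda$ via the inclusion $\theta\subseteq\lambda$. Concretely, for each $\alpha<\kappa$ set $A_\alpha=\{\xi<\theta:G(\xi,\alpha)=1\}$, and let $\calA=\{A_\alpha:\alpha<\kappa\}\subseteq\power(\lambda)$. Then $|\calA|=\kappa\geq\theta$. For $\theta$-independence, a routine density argument in $\Add(\theta,\kappa)$ suffices: given $p\colon\calA\to 2$ of size $<\theta$ with $I=\dom(p)\subseteq\kappa$, any condition $q$ can be extended to force a fresh $\xi<\theta$ into $\bigcap_p$, since $\theta$ is regular and $|\dom(q)|,|I|<\theta$.

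For maximality, take $B\in V[G]$ with $B\subseteq\lambda$ and $B\notin\calA$. Since $\bigcap_p\subseteq\theta$ whenever $p$ is nonempty and assigns $1$ to some coordinate, the cases $B\cap\theta=\emptyset$ and $\theta\subseteq B$ are trivial. Otherwise, replacing $B$ with $B\cap\theta$, one reduces to finding $p$ of size $<\theta$ with $\bigcap_p\subseteq B$ or $\bigcap_p\cap B=\emptyset$, for $B\subseteq\theta$ with $B\notin\{\emptyset,\theta\}$.

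Here the $\theta^+$-strong compactness of $\kappa$ enters. I would take a $\theta^+$-complete ultrafilter $U$ witnessing the compactness and form the ultrapower $j\colon V\to M$, which comes equipped with a set $X\in M$ covering $j''\kappa$ of $M$-size $<j(\theta^+)$. Given a name $\dot B$ for $B$, I would lift $j$ through the forcing by writing $j(\Add(\theta,\kappa))\cong\Add(\theta,\kappa)\ast\ddbbQ$ with $\ddbbQ$ sufficiently closed in $V[G]$, analyse $j(\dot B)$ in $M[j(G)]$, and use the covering property of $X$ to extract from $j(\dot B)$ a single ``decider'' partial function $p$ of size $<\theta$ that lies in $V[G]$ and forces $\bigcap_p\subseteq B$ or $\bigcap_p\cap B=\emptyset$. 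The hypothesis $\cf(\lambda)\geq\kappa$ should appear in the name analysis, controlling the support of $\dot B$ across the $\kappa$ columns of the generic so that the reduction to $B\cap\theta$ interacts correctly with the lifted embedding.

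The main obstacle is obtaining $|p|<\theta$ rather than $|p|\leq\theta$: naively deciding each $\xi<\theta$ separately against $\dot B$ yields $\theta$-many coordinates, one per $\xi$. The $\theta^+$-completeness of $U$ is precisely the tool for amalgamating these into a single decider of size $<\theta$, but one must also verify via closure of $\ddbbQ$ and a master-condition-style argument that the resulting $p$ belongs to $V[G]$ and not merely to $M[j(G)]$, and that the same $p$ decides $\dot B$ uniformly on the $U$-large fibre $X$ rather than on some smaller piece.
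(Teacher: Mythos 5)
Your choice of family is the problem: the collection of Cohen subsets of \(\theta\) added by \(G\) is \(\theta\)-independent but very far from maximal, and no analysis of the \(\theta^+\)-strongly compact embedding in your second half can repair this, because non-maximality is witnessed already by ground-model sets. Concretely, fix any \(B\in V\) with \(B\subseteq\theta\) and \(\abs{B}=\abs{\theta\setminus B}=\theta\). Any partial \(p\colon\calA\to2\) with \(\abs{p}<\theta\) is (essentially) a ground-model object by the \(\theta\)-closure of \(\Add(\theta,\kappa)\), and if \(p\) takes the value \(1\) somewhere (so that \(\calA^p\subseteq\theta\)), a one-line density argument shows \(\calA^p\) meets both \(B\) and \(\theta\setminus B\) in sets of size \(\theta\): a condition \(q\) mentions fewer than \(\theta\) rows \(\xi<\theta\), so one can pick fresh \(\xi\in B\) and \(\xi'\in\theta\setminus B\) and extend \(q\) to force both into \(\calA^p\). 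Hence \(\calA\cup\{B\}\) is still \(\theta\)-independent and \(\calA\) is not maximal. When \(\theta=\mu^+\) is a successor the obstruction is also visible abstractly: by \cref{thm:family-gives-ideal} a maximal family would induce a \((2^\mu)^+=\theta^+\)-complete ideal, whereas the ideal induced by your family contains every singleton of \(\theta\) together with \(\lambda\setminus\theta\), so it is not even \(\theta^+\)-complete. A symptom of the same defect is that \(\lambda\) and the hypothesis \(\cf(\lambda)\geq\kappa\) play no genuine role in your construction: you propose literally the same family, concentrated on \(\theta\), for every \(\lambda\).

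The paper's proof builds a different object entirely. For each \(\lambda\) it works on \(X=\power_\kappa(\lambda)^V\) (of size \(\lambda\)), takes a fine \(\theta\)-complete ultrafilter \(\calU\) on \(X\) with ultrapower embedding \(j=j_\calU\), and sets \(\calI=\tup{\calU^\ast}^{V[G]}\). Applying \cref{thm:duality-prime} to the factorisation \(j(\Add(\theta,\kappa))\cong\Add(\theta,\kappa)\times\Add(\theta,j(\kappa)\setminus\kappa)\times\Set{\1}\) yields \(\power(X)/\calI\cong B(\Add(\theta,j(\kappa)\setminus\kappa))\); a cardinality computation — this is where \(\cf(\lambda)\geq\kappa\) and the strong limit hypothesis actually enter, via \(2^\lambda\leq j(\kappa)<(2^\lambda)^+\) — identifies this with \(B(\Add(\theta,2^X))\); and \cref{lem:ideal-gives-family} then produces the family, whose members are representatives \(A_{f,\delta}\) of the images of the generators of \(\Add(\theta,2^X)\) under the dense embedding into \(\power(X)/\calI\), not the Cohen sets themselves. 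The Cohen forcing serves only to put the quotient \(\power(X)/\calI\) into the right form; maximality comes from the density of that embedding, which is exactly what your family lacks.
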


We also extend the technique to the case that there is a proper class of measurable cardinals, iterating the process to induce a model in which, for all ground-model measurable cardinals \(\kappa\), there is a maximal \(\kappa\)-independent family \(\calA\subseteq\power(\kappa)\) in the forcing extension. An analysis of the iteration also shows that the Mitchell rank of cardinals is very nearly preserved.

\begin{thmalphintro}[{\cref{s:applications;ss:class-of-measurables}}]
Let \(V\) be a model of \(\ZFC+\GCH\). There is a class-length forcing iteration \(\bbP\) preserving \(\ZFC+\GCH\) such that, if \(G\subseteq\bbP\) is \(V\)-generic, then whenever \(\kappa\) is a measurable cardinal in \(V\) there is a maximal \(\kappa\)-independent family \(\calA\subseteq\power(\kappa)\) in \(V[G]\). Furthermore, whenever \(\kappa\) is a measurable cardinal in \(V\), \(o(\kappa)^V=1+o(\kappa)^{V[G]}\), and whenever \(\kappa\) is non-measurable in \(V\) it remains non-measurable in \(V[G]\).
\end{thmalphintro}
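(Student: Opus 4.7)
The plan is to build \(\bbP\) as a reverse Easton class iteration \(\bbP = \langle \bbP_\alpha, \ddbbQ_\alpha : \alpha \in \Ord \rangle\) with Easton support: the iterand \(\ddbbQ_\alpha\) is trivial unless \(\alpha\) is measurable in \(V\), in which case it is a \(\bbP_\alpha\)-name for a Kunen-style single-step forcing of \(\Add(\alpha,\alpha^+)\)-type (following the method of Theorem A, adapted to the parameters \(\theta = \lambda = \alpha\)) that produces a maximal \(\alpha\)-independent family \(\calA_\alpha \subseteq \power(\alpha)\) in \(V^{\bbP_\alpha \iter \ddbbQ_\alpha}\). Since the nontrivial stages occur only at measurables, the usual Easton-support bookkeeping preserves \(\ZFC + \GCH\) class-generically.

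Fix a \(V\)-measurable \(\kappa\) and factor \(\bbP = \bbP_\kappa \iter \ddbbQ_\kappa \iter \ddbbR\). The tail \(\ddbbR\) has support above \(\kappa^+\) and, in \(V^{\bbP_\kappa \iter \ddbbQ_\kappa}\), is sufficiently closed to add no new subsets of \(\kappa\); hence \(\calA_\kappa\) remains a maximal \(\kappa\)-independent family in \(V[G]\). For the Mitchell-rank claim, I would define \(\ddbbQ_\kappa\) using a fixed \(V\)-normal measure of top Mitchell rank on \(\kappa\) (when one exists). Given any normal measure \(U\) on \(\kappa\) in \(V\) of strictly lower Mitchell rank, let \(j \colon V \to M\) be the corresponding ultrapower embedding; then \(j(\bbP)\) factors in \(M\) as \(\bbP_\kappa \iter j(\ddbbQ_\kappa) \iter \bbR'\), with \(j(\ddbbQ_\kappa)\) trivial or benign in \(M\) because \(o(\kappa)^M < o(\kappa)^V\). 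A master-condition argument then lifts \(j\) to \(\tj \colon V[G] \to M[G']\), witnessing that the canonical extension of \(U\) remains a normal measure at the same Mitchell level in \(V[G]\). Conversely, \(\ddbbQ_\kappa\) is chosen precisely so that the top layer of Mitchell rank is destroyed: no ultrapower by a top-rank measure can contain the newly added family \(\calA_\kappa\). Together these give \(o(\kappa)^V = 1 + o(\kappa)^{V[G]}\).

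That non-measurable cardinals in \(V\) remain non-measurable in \(V[G]\) follows because \(\bbP_\delta\) has cardinality \(<\delta\) at inaccessibles and the tail is sufficiently closed to add no new ultrafilters at any \(\delta\). The main obstacle is arranging \(\ddbbQ_\kappa\) so that simultaneously (i) it yields a maximal \(\kappa\)-independent family in \(\power(\kappa)\), (ii) it destroys precisely the top Mitchell layer at \(\kappa\), and (iii) every normal measure on \(\kappa\) of strictly lower Mitchell rank admits a master-condition lift through \(\bbP_\kappa \iter \ddbbQ_\kappa\). Pinning down a single-step forcing that visibly exhibits all three properties — including the delicate dependence on a distinguished top measure, so that lower-rank ultrapowers see a trivial version of \(\ddbbQ_\kappa\) — is the technical heart of the argument.
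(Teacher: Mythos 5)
Your architecture is inverted relative to what actually works, and the inversion is not cosmetic. You make the iteration nontrivial at the measurable cardinals and ask for a ``single-step forcing \(\ddbbQ_\kappa\) at \(\kappa\)'' that adds a maximal \(\kappa\)-independent family on \(\kappa\) itself; no such single step is available. Kunen's one-step forcing \(\Add(\theta,\kappa)\) (\cref{thm:kunens-theorem-applicable}) requires \(\theta<\crit(j)=\kappa\) in order for \(j(\Add(\theta,\kappa))\) to factor as \(\Add(\theta,\kappa)\times\Add(\theta,j(\kappa)\setminus\kappa)\) with \(j(p)=p\), and it places the family on \(2^\theta\), not on \(\theta\); with \(\theta=\kappa\) the factoring breaks down (conditions in \(\Add(j(\kappa),\cdot)^M\) may have size \(\geq\kappa\)) and the family would in any case land on \(2^\kappa\). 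The correct mechanism (\cref{prop:iteration-length-kappa}) is to force nontrivially at the regular \emph{non-measurable} cardinals and do nothing at \(\kappa\): choosing a normal measure \(\calU\) with \(o(\calU)=0\), the set \(A\) of nontrivial stages below \(\kappa\) is in \(\calU\), so \(j_\calU(\bbP_\kappa)\) acquires an \(\Add(\kappa,\kappa^+)\) iterand at stage \(\kappa\) that cannot be realised over \(V[G\res\kappa]\); \cref{thm:duality-prime} converts exactly this obstruction into a \(\kappa\)-complete ideal \(\calI\) with \(\power(\kappa)/\calI\cong B(\Add(\kappa,2^\kappa))\), and \cref{lem:ideal-gives-family} then yields the family. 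Under your scheme, for the least measurable \(\kappa\) the iteration \(\bbP_\kappa\) is trivial and \(\kappa\) is not measurable in the ultrapower, so there is no unrealisable factor at \(\kappa\), no nontrivial quotient ideal, and no visible source for the family.

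Your Mitchell-rank analysis is inverted in the same way. It is the rank-\(0\) measures that are consumed (they concentrate on \(A\), so \(\kappa\in j(A)\) and they cannot lift -- that is the price of the family), while every measure \(\calV\) with \(o(\calV)>0\) concentrates on measurables, hence off \(A\), so \(\kappa\notin j_\calV(A)\) and \(j_\calV\) lifts with no master-condition argument needed, just by constructing the tail generic inside \(V[G\res\kappa]\). Removing the \emph{bottom} layer is precisely what \(o(\kappa)^V=1+o(\kappa)^{V[G]}\) (equivalently \(o(\kappa)^{V[G]}=\premin{o(\kappa)^V}\)) encodes; ``destroying the top layer,'' as you propose, gives a different formula and is not even well defined at limit ranks. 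Finally, your justification that non-measurables stay non-measurable (``\(\abs{\bbP_\delta}<\delta\) plus a closed tail'') is insufficient: \(\bbP_\delta\) typically has cardinality \(\delta\), not \(<\delta\), so L\'evy--Solovay does not apply; the paper instead observes that \(\bbP\) has a closure point at \(\omega\) and invokes \cref{prop:hamkins-closure-gives-unlifting} to pull any normal measure of \(V[G]\) back to one in \(V\). That pullback is also what pins down the upper bound \(o(\kappa)^{V[G]}\leq\premin{o(\kappa)^V}\), a step your proposal does not address.
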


\subsection{Structure of the paper}

In \cref{s:preliminaries} we go over preliminaries, partially to set our notational conventions. An existing familiarity with forcing and ultrapower embeddings will be helpful for understanding the content of the proofs. In \cref{s:tools} we present two powerful tools, our proverbial hammers, for constructing models with maximal \(\theta\)-independent families. In \cref{s:applications} we apply our hammers to various nails: Kunen's theorem (\cref{thm:kunens-theorem-applicable}) and our extensions (\cref{thm:main-sc} and \cref{thm:measurable-class-families}). We end in \cref{s:future} with some open questions.

\section{Preliminaries}\label{s:preliminaries}

We work in \(\ZFC\). For cardinals \(\kappa\leq\lambda\), let \({\power_\kappa(\lambda)=\Set{X\subseteq\lambda\mid\abs{X}<\kappa}}\). If \(V\) is a model of \(\ZFC\) then by \((\kappa^\lambda)^V\), \(\power(X)^V\), etc.\ we mean respectively the set of functions \(\lambda\to\kappa\) in \(V\), the set of subsets of \(X\) in \(V\), etc. \(\id\) is the identity function on an appropriate domain.

\subsection{Independent families}

For regular \(\theta\) and \(\abs{X}\geq\theta\), a \emph{\(\theta\)-independent family on \(X\)} is \(\calA\subseteq\power(X)\) such that \(\abs{\calA}\geq\theta\) and, for all partial functions \(p\colon\calA\to2\) with \(\abs{p}<\theta\),
\begin{equation*}
\calA^p\defeq\bigcap\Set*{A\mid p(A)=1}\cap\bigcap\Set*{X\setminus A\mid p(A)=0}\neq\emptyset.
\end{equation*}
\(\calA\) is a \emph{maximal \(\theta\)-independent family} if, for all \(\theta\)-independent \(\calA'\supseteq\calA\) on \(X\), \(\calA'=\calA\).

\subsection{Ideals}

\(\calI\subseteq\power(X)\) is an \emph{ideal} (on \(X\)) if \(\calI\cap\Set{\emptyset,X}=\Set{\emptyset}\) and \(\calI\) is closed under subsets of elements and finite unions of elements. We say that \(\calI\) is: \emph{\(\lambda\)-complete} if \(\calI\) is closed under unions of fewer than \(\lambda\) many elements; \emph{\(\lambda\)-saturated} if, for all \({\Set{A_\alpha\mid\alpha<\lambda}\subseteq\power(X)\setminus\calI}\), there are \(\alpha<\beta<\lambda\) such that \(A_\alpha\cap A_\beta\notin\calI\); \emph{non-trivial} if \([X]^1\subseteq\calI\); and \emph{prime} if for all \(A\subseteq X\), \(A\in\calI\) or \(X\setminus A\in\calI\).

\(\calF\subseteq\power(X)\) is a \emph{filter} (on \(X\)) if \(\calF^\ast\defeq\Set{X\setminus A\mid A\in\calF}\) is an ideal on \(X\). We say that \(\calF\) is \emph{\(\lambda\)-complete} if \(\calF^\ast\) is \(\lambda\)-complete (as an ideal), and an \emph{ultrafilter} if \(\calF^\ast\) is prime. An ultrafilter \(\calU\) on a set \(S\subseteq\power(X)\) is \emph{fine} if for all \(x\in X\), \({\Set{A\in S\mid x\in A}\in\calU}\). We shall say that an ultrafilter \(\calU\) on a cardinal \(\kappa\) is a \emph{measure} if it is \(\kappa\)-complete, and that a measure \(\calU\) on \(\kappa\) is \emph{normal} if for all \(A\in\calU\) and \(f\in\prod A\) there is \(B\in\calU\) such that \(f\res B\) is constant.

Given two models \(V\subseteq W\) of \(\ZFC\) and an ideal \(\calI\in V\) on \(X\), we denote by \(\tup{\calI}^W\) the ideal generated by \(\calI\) in the extension:
\begin{equation*}
\tup{\calI}^W=\Set*{A\in\power(X)^W\mid(\exists Y\in\calI)\,A\subseteq Y}.
\end{equation*}

\subsection{Large cardinals}

We briefly revise measurable cardinals and the construction of ultrapower embeddings in order to fix notation, though our presentation is standard. Given a \(\sigma\)-complete ultrafilter \(\calU\) on an infinite set \(X\) and functions \(f,g\colon X\to V\) we say that \(f\mathrel{=_\calU}g\) if \(\Set{x\in X\mid f(x)=g(x)}\in\calU\), and denote by \([f]_\calU\) the \(=_\calU\)-equivalence class of \(f\). We then endow these classes with the relation \(\in_\calU\) given by \([f]_\calU\mathrel{\in_\calU}[g]_\calU\) if \({\Set{x\in X\mid f(x)\in g(x)}\in\calU}\). Finally, we identify this ultrapower construction
\begin{equation*}
\Ult(V,\calU)=\left(\Set*{[f]_\calU\mid f\colon X\to V},\in_\calU\right)
\end{equation*}
and the Mostowski collapse \(M\) of this structure, going as far as to say \(a=[f]_\calU\) to mean that \(a\) is the element of \(M\) associated with \([f]_\calU\) under the collapse. We then refer to the elementary embedding \(j_\calU\colon V\to M\) given by \(j_\calU(a)=[c_a]_\calU\), where \(c_a\) is the constant function \(X\to\Set{a}\), as the associated \emph{ultrapower embedding} of \(\calU\). The \emph{critical point} of an elementary embedding \(j\), denoted \(\crit(j)\) is \(\min\Set{\alpha\mid\alpha<j(\alpha)}\). If \(\calU\in V\) then \(\crit(j_\calU)\) is measurable in \(V\) (and hence a regular strong limit). We say that a transitive inner model \(M\) is \emph{\(\lambda\)-closed} to mean that \(M^{{<}\lambda}\subseteq M\). We may say that \(M\) is \(\lambda\)-closed \emph{in \(V\)} to emphasise that we specifically mean \(M^{{<}\lambda} \cap V \subseteq M \subseteq V\).

\subsubsection{The Mitchell order}

Given normal measures \(\calU\) and \(\calV\) on \(\kappa\), say that \(\calV\mitchless\calU\) if \(\calV \in \Ult(\calU,\calV)\). The relation \(\mitchless\) here is called the \emph{Mitchell order}. This order was introduced and proved to be well-founded by Mitchell in \cite{mitchell_sets_1974}, so we may therefore endow such measures with their \emph{Mitchell rank} \(o(\calU)\), the order type of \(\Set{\calV\mid\calV\mitchless\calU}\). Similarly we define the Mitchell rank of a cardinal \(\kappa\) to be the height of the tree induced by \(\mitchless\), denoted \(o(\kappa)\). In particular, our convention is that \(o(\kappa)>0\) if and only if \(\kappa\) is measurable. Also, \(o(\calU)=\alpha\) if and only if \({\Set{\lambda<\kappa\mid o(\lambda)=\alpha}\in\calU}\).\footnote{This is easiest to prove by noting  \(j_\calU(\tup{o(\alpha)\mid\alpha<\kappa})(\kappa)=o(\calU)\) (from \cite{mitchell_sets_1983}).}

\subsubsection{\(\theta\)-strongly compact cardinals}

We also require a large cardinal property that was introduced in \cite{bagaria_group_2014}.

\begin{defn}
For \(\theta\leq\kappa\) we say that \(\kappa\) is \emph{\(\theta\)-strongly compact} if every \(\kappa\)-complete filter on an arbitrary set \(X\) can be extended to a \(\theta\)-complete ultrafilter on \(X\).
\end{defn}

Note that sometimes in the literature a ``\(\theta\)-strongly compact'' cardinal refers to a cardinal \(\kappa\leq\theta\) such that there is a \(\kappa\)-complete fine ultrafilter on \(\power_\kappa(\theta)\). We shall not make use of this other definition.

\begin{thm}[{\cite[Theorem~4.7]{bagaria_group_2014}}]\label{thm:sc-characterisation}
The following are equivalent:
\begin{enumerate}[label=\textup{(\arabic*)}]
\item\label{cond:ultra-extend} \(\kappa\) is \(\theta\)-strongly compact.
\item\label{cond:embedding} For all \(\alpha\geq\kappa\) there is an elementary embedding \(j\colon V\to M\), where \(M\) is a transitive inner model of \(\ZFC\), such that \(\crit(j)\geq\theta\) and, for some \(D\in M\), \(j``\alpha\subseteq D\) and \(M\vDash\abs{D}<j(\kappa)\).
\item\label{cond:fine-ultra} For all \(\alpha\geq\kappa\) there is a fine \(\theta\)-complete ultrafilter on \(\power_\kappa(\alpha)\).
\end{enumerate}
\end{thm}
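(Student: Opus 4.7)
The plan is the cyclic chain \ref{cond:ultra-extend}\(\Rightarrow\)\ref{cond:fine-ultra}\(\Rightarrow\)\ref{cond:embedding}\(\Rightarrow\)\ref{cond:ultra-extend}. For \ref{cond:ultra-extend}\(\Rightarrow\)\ref{cond:fine-ultra}, given \(\alpha\geq\kappa\) I would consider the filter on \(\power_\kappa(\alpha)\) generated by the \emph{fineness sets} \(\hat{x}=\Set{P\mid x\in P}\) for \(x\in\alpha\). The intersection of fewer than \(\kappa\) of the \(\hat{x}\) has the form \(\Set{P\mid Y\subseteq P}\) for some \(Y\in\power_\kappa(\alpha)\) and so is nonempty; hence the generated filter is \(\kappa\)-complete and proper, and applying \ref{cond:ultra-extend} extends it to the required fine \(\theta\)-complete ultrafilter.

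For \ref{cond:fine-ultra}\(\Rightarrow\)\ref{cond:embedding}, I would form \(j=j_\calU\colon V\to M\) for such a \(\calU\). Then \(\crit(j)\geq\theta\) by \(\theta\)-completeness of \(\calU\), and taking \(D=[\id]_\calU\in M\), Łoś's theorem gives both \(j``\alpha\subseteq D\) (by fineness) and \(M\vDash\abs{D}<j(\kappa)\) (because \(\power_\kappa(\alpha)\in\calU\)).

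The main obstacle, and the core content of the theorem, is \ref{cond:embedding}\(\Rightarrow\)\ref{cond:ultra-extend}. Given a \(\kappa\)-complete filter \(\calF\) on some \(X\), I would enumerate \(\calF\) as a function \(h\colon\mu\to\calF\) with \(\mu=\max(\kappa,\abs{\calF})\) and apply \ref{cond:embedding} at \(\alpha=\mu\) to obtain \(j\colon V\to M\) with \(\crit(j)\geq\theta\) together with \(D\in M\) satisfying \(j``\mu\subseteq D\) and \(M\vDash\abs{D}<j(\kappa)\). Inside \(M\), the set \(E=\Set{j(h)(\gamma)\mid\gamma\in D\cap j(\mu)}\) has \(M\)-cardinality less than \(j(\kappa)\) and contains \(j``\calF\), since \(j(h)(j(\beta))=j(h(\beta))\) and \(j(\beta)\in D\cap j(\mu)\) for each \(\beta<\mu\). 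The filter \(j(\calF)\) is \(j(\kappa)\)-complete in \(M\), so \(\bigcap E\) is nonempty there; I would pick a seed \(y\in\bigcap E\) and define \(\calU=\Set{A\subseteq X\mid y\in j(A)}\). Elementarity makes \(\calU\) an ultrafilter; \(y\in j(h(\beta))\) for every \(\beta<\mu\) gives \(\calF\subseteq\calU\); and \(\crit(j)\geq\theta\) forces \(\theta\)-completeness, because \(j\) commutes with intersections of length less than \(\theta\). The subtle point is the \emph{packaging} of the external data \(j``\mu\) into a bounded set \(D\) living inside \(M\); that this bridging object exists is precisely clause \ref{cond:embedding}, and it is what makes the seed argument go through.
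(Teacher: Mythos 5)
Your argument is correct and is essentially the standard proof of this equivalence; the paper itself does not reprove it but cites Bagaria--Magidor, and your \ref{cond:fine-ultra}\(\Rightarrow\)\ref{cond:embedding} step is exactly the Fact the paper records immediately after the theorem (taking \(D=[\id]_\calU\)). The cycle \ref{cond:ultra-extend}\(\Rightarrow\)\ref{cond:fine-ultra}\(\Rightarrow\)\ref{cond:embedding}\(\Rightarrow\)\ref{cond:ultra-extend}, with the fineness filter and the seed \(y\in\bigcap j(h)``(D\cap j(\mu))\), is the same route as the cited source.
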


\begin{factstar}
If \(\calU\) is a fine \(\theta\)-complete ultrafilter on \(\power_\kappa(\alpha)\) then \(j_\calU\) satisfies \cref{cond:embedding} with \(D=[\id]_\calU\).
\end{factstar}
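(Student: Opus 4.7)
The plan is to verify the three conclusions of \cref{cond:embedding} in turn: that \(\crit(j_\calU)\geq\theta\), that \(j_\calU``\alpha\subseteq[\id]_\calU\), and that the ultrapower \(M=\Ult(V,\calU)\) thinks \([\id]_\calU\) has size less than \(j_\calU(\kappa)\). Each of these is a standard Łoś-style computation, so the main task is merely to unpack the identifications and apply the hypotheses on \(\calU\) (fineness and \(\theta\)-completeness) at the right points.

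First I would recall the familiar fact that the ultrapower by a \(\theta\)-complete ultrafilter has critical point at least \(\theta\): any would-be witness \(\alpha<\theta\) with \(j_\calU(\alpha)\neq\alpha\) would yield a partition of \(\power_\kappa(\alpha)\) into fewer than \(\theta\) pieces contradicting \(\theta\)-completeness, giving \(\crit(j_\calU)\geq\theta\). Next, for each \(\beta<\alpha\) I would note that \(j_\calU(\beta)=[c_\beta]_\calU\), so by the definition of \(\in_\calU\),
\begin{equation*}
j_\calU(\beta)\mathrel{\in_\calU}[\id]_\calU
\iff
\Set{x\in\power_\kappa(\alpha)\mid\beta\in x}\in\calU,
\end{equation*}
and the right-hand side is exactly fineness of \(\calU\). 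Passing through the Mostowski collapse then gives \(j_\calU(\beta)\in[\id]_\calU\) inside \(M\), i.e.\ \(j_\calU``\alpha\subseteq[\id]_\calU\).

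Finally, for the cardinality bound, I would apply Łoś's theorem to the formula ``\(|y|<\kappa\)'': since \(j_\calU(\kappa)=[c_\kappa]_\calU\),
\begin{equation*}
M\vDash\abs{[\id]_\calU}<j_\calU(\kappa)
\iff
\Set{x\in\power_\kappa(\alpha)\mid\abs{x}<\kappa}\in\calU,
\end{equation*}
and by definition every element of \(\power_\kappa(\alpha)\) has size less than \(\kappa\), so this set is the entire underlying space. Thus \(D=[\id]_\calU\) works. There is no genuine obstacle here; the only thing to be careful about is not conflating \([\id]_\calU\) before and after the Mostowski collapse, but since the collapse is an isomorphism onto \(M\) all of the above computations transfer verbatim.
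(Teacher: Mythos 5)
Your argument is correct and is the standard Łoś-style verification; the paper states this as a Fact without proof precisely because the computation is routine, so there is nothing in the paper to diverge from. The only blemish is notational: you reuse \(\alpha\) both for the index in \(\power_\kappa(\alpha)\) and for the would-be witness below \(\theta\) in the critical-point argument, so that letter should be renamed (and, if one wants to be fully pedantic, one should note that \(\theta\)-completeness for uncountable \(\theta\) gives \(\sigma\)-completeness, hence well-foundedness of the ultrapower, so that the transitive collapse \(M\) exists at all — a hypothesis the paper builds into its ultrapower conventions).
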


\cref{cor:immediate-sc}, remarked upon in \cite{bagaria_group_2014}, is immediate but important.

\begin{cor}\label{cor:immediate-sc}
Let \(\kappa\) be \(\theta\)-strongly compact.
\begin{enumerate}[label=\textup{(\roman*)}]
\item\label{cor:im-sc;cond:measure} There is a measurable cardinal \(\mu\) such that \(\theta\leq\mu\leq\kappa\). If \(\mu\) is the least measurable cardinal greater than or equal to \(\theta\) then \(\kappa\) is \(\mu\)-strongly compact.
\item\label{cor:im-sc;cond:upwards} For all \(\lambda\geq\kappa\), \(\lambda\) is \(\theta\)-strongly compact. In particular, there is a \(\theta\)-strongly compact cardinal \(\lambda\) such that \(2^{{<}\lambda}=\lambda\).
\end{enumerate}
\end{cor}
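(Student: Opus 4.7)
The plan is to dispatch the two clauses of \cref{cor:immediate-sc} in turn, beginning with the simpler \cref{cor:im-sc;cond:upwards}. Its first sentence is immediate from the definition, since a $\lambda$-complete filter is automatically $\kappa$-complete when $\lambda\geq\kappa$ and so extends to a $\theta$-complete ultrafilter by $\theta$-strong compactness of $\kappa$. For the ``in particular'' clause I would take any strong limit cardinal $\lambda\geq\kappa$ (for instance $\lambda=\beth_\omega(\kappa)$), for which $2^{{<}\lambda}=\lambda$ holds in \(\ZFC\), and apply the first sentence.

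For the first part of \cref{cor:im-sc;cond:measure}, I would apply condition~\ref{cond:fine-ultra} of \cref{thm:sc-characterisation} at $\alpha=\kappa$ to obtain a fine $\theta$-complete ultrafilter $\calU$ on $\power_\kappa(\kappa)$, then consider the ultrapower embedding $j=j_\calU\colon V\to M$. The preceding fact tells us that $j$ realises condition~\ref{cond:embedding} with $D=[\id]_\calU$, and $\mu\defeq\crit(j)$ is measurable as the critical point of an elementary embedding into a transitive inner model. The inequality $\theta\leq\mu$ is part of condition~\ref{cond:embedding}. For $\mu\leq\kappa$ I would argue by contradiction: if $\crit(j)>\kappa$ then $j\res\kappa=\id$ gives $j``\kappa=\kappa\subseteq D$, whence $M\vDash\abs{D}\geq\kappa=j(\kappa)$, contradicting $M\vDash\abs{D}<j(\kappa)$.

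For the second sentence of \cref{cor:im-sc;cond:measure}, let $\mu$ be the least measurable cardinal at least $\theta$ and let $\calF$ be a $\kappa$-complete filter on a set $X$. By $\theta$-strong compactness $\calF$ extends to a $\theta$-complete ultrafilter $\calU$, and it suffices to show that $\calU$ is $\mu$-complete. If not, pick $\lambda\in[\theta,\mu)$ largest such that $\calU$ is $\lambda$-complete but not $\lambda^+$-complete; a standard refinement extracts a partition $\Set{D_\alpha\mid\alpha<\lambda}$ of $X$ with each $D_\alpha\notin\calU$, so the pushforward of $\calU$ along the induced map $X\to\lambda$ is a non-principal $\lambda$-complete ultrafilter on $\lambda$. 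This witnesses that $\lambda$ is measurable, contradicting the minimality of $\mu$. The only step that deserves care is this last transition from incomplete ultrafilter to measurable cardinal, but it is entirely standard, so I do not anticipate any real obstacle.
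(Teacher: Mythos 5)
Your argument is correct, and for most of the statement it coincides with the paper's: item \ref{cor:im-sc;cond:upwards} is proved identically (including the choice $\lambda=\beth_\omega(\kappa)$), and the first sentence of item \ref{cor:im-sc;cond:measure} is the same computation, differing only in that you instantiate \cref{thm:sc-characterisation}\ref{cond:fine-ultra} at $\alpha=\kappa$ and derive a contradiction from $\crit(j)>\kappa$, whereas the paper takes $\alpha=\kappa^+$ and reads off $\kappa^+\leq j(\kappa)$ directly; both hinge on the same tension between $j``\alpha\subseteq D$ and $M\vDash\abs{D}<j(\kappa)$. (You should, strictly speaking, note that $\crit(j)$ exists at all --- fineness rules out $\calU$ being principal, and a non-principal $\theta$-complete ultrafilter has a measurable critical point --- but your contradiction also disposes of the case $j=\id$, so this is cosmetic.) Where you genuinely diverge is the second sentence of item \ref{cor:im-sc;cond:measure}. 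The paper stays inside the embedding characterisation: every $j$ witnessing \cref{thm:sc-characterisation}\ref{cond:embedding} has measurable critical point at least $\theta$, hence at least $\mu$, so condition \ref{cond:embedding} holds verbatim with $\mu$ in place of $\theta$ and one invokes the equivalence again. You instead work directly with the filter-extension definition \ref{cond:ultra-extend}: any $\theta$-complete ultrafilter that fails to be $\mu$-complete has a degree of completeness $\lambda\in[\theta,\mu)$, and the classical projection along a partition into $\lambda$ null pieces makes $\lambda$ measurable, contradicting the minimality of $\mu$. Your route is more elementary in that it avoids a second pass through \cref{thm:sc-characterisation} (and in fact proves the slightly stronger fact that \emph{every} $\theta$-complete ultrafilter is automatically $\mu$-complete), at the cost of importing the standard completeness-degree lemma; the paper's route is shorter once the characterisation theorem is already on the table. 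Either is acceptable.
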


\begin{proof}
\textit{Item}\enspace{}\ref*{cor:im-sc;cond:measure}.\quad{}Firstly, by \cref{cond:embedding} in \cref{thm:sc-characterisation} with \(\alpha=\kappa^+\), we obtain \(j\colon V\to M\) with \(\theta\leq\crit(j)\). Since there is an injection \(D\to j(\kappa)\) in \(M\) with \(j``\kappa^+\subseteq D\), we must have \(\kappa^+\leq j(\kappa)\), so \(\crit(j)\leq\kappa\). Hence \(\crit(j)\) is measurable with \({\theta\leq\crit(j)\leq\kappa}\).

Let \(\mu\) be the least measurable cardinal with \(\mu\geq\theta\). Then for each \(\alpha\geq\kappa\) there is \(j\colon V\to M\) such that \(\crit(j)\geq\theta\) and, for some \(D\in M\), \(j``\alpha\subseteq D\) and \(M\vDash\abs{D}<j(\kappa)\). However, in this case we must also have \(\crit(j)\geq\mu\). Hence \(\kappa\) is \(\mu\)-strongly compact.

\textit{Item}\enspace{}\ref*{cor:im-sc;cond:upwards}.\quad{}For all \(\lambda\geq\kappa\), any \(\lambda\)-complete filter \(\calF\) on any set \(X\) is also \(\kappa\)-complete. Hence, by the definition of \(\kappa\) being \(\theta\)-strongly compact, the filter \(\calF\) extends to a \(\theta\)-complete ultrafilter \(\calU\) on \(X\). Setting \(\lambda=\beth_\omega(\kappa)=\sup\Set{\kappa,\,2^\kappa,\,2^{2^{\kappa}},\dots}>\kappa\) we have \(2^{{<}\lambda}=\lambda\) and \(\lambda\) is \(\theta\)-strongly compact.
\end{proof}

\subsection{Forcing}

Our presentation of forcing is standard. For a more in-depth introduction to forcing there are a wealth of excellent texts, such as \cite[Ch.~14]{jech_set_2003}. By a \emph{notion of forcing} we mean a partial order \(\tup{\bbP,\leq_\bbP}\) with a maximum \(\1_{\bbP}\), though subscripts will be omitted when clear from context. We force downwards, so if \(q\leq p\) then we say that \(q\) is an \emph{extension} or \emph{stronger condition} than \(p\). \(\bbP\) has the \emph{\(\kappa\)-chain condition}, or \(\kappa\)-c.c., if every antichain in \(\bbP\) has cardinality less than \(\kappa\). We say that \(\bbP\) is \emph{\(\kappa\)-closed} if for every \(\gamma<\kappa\) and descending chain \({\Set{p_\alpha\mid\alpha<\gamma}}\subseteq\bbP\) there is \(q\in\bbP\) such that \(q\leq p_\alpha\) for all \(\alpha<\gamma\). By a \emph{\(V\)-generic filter} for \(\bbP\) we mean a filter \(G\subseteq\bbP\) such that for all dense \(D\subseteq\bbP\) with \(D\in V\), \(G\cap D\neq\emptyset\).

We say that \(p,q\in\bbP\) are \emph{compatible}, denoted \(p\comp_{\bbP}q\), if there is \(r\in\bbP\) with \(r\leq p,q\). Otherwise, we say that \(p\) and \(q\) are \emph{incompatible}, denoted \(p\perp_{\bbP}q\). Again, subscripts are omitted when clear from context.

Given a set of \(\bbP\)-names \(X\) we define the \emph{bullet name} of \(X\), denoted \(X^\bullet\), to be \(\Set{\tup{\1_{\bbP},\ddx}\mid \ddx\in X}\). Then \(X^\bullet\) is always interpreted as the set \({\Set{\ddx^G\mid \ddx\in X}}\) in the extension \(V[G]\). This notation extends to tuples, functions, etc.\ with ground model domains. For \(X\in V\) we define the \emph{check name} of \(X\), denoted \(\check{X}\), to be \({\Set{\check{x}\mid x\in X}^\bullet}\). That is, \(\check{X}^G=X\in V[G]\) for all \(V\)-generic \(G\subseteq\bbP\). We may alternatively use the bullet notation to define a canonical name of a definable object. For example, \(\power(\check{X})^\bullet\) is the canonical name for the power set of \(X\) in the forcing extension.

Given non-empty sets \(A\) and \(B\) we denote by \(\Add(B,A)\) the notion of forcing given by partial functions \(p\colon A\times B\to2\) such that \(\abs{p}<\abs{B}\).\footnote{When \(\abs{A}\geq\abs{B}\) we denote by \(\Fn(A,2,B)\) the notion of forcing with conditions that are partial functions \(p\colon A\to2\) such that \(\abs{p}<\abs{B}\). In this case \(\Fn(A,2,B)\) is isomorphic to \(\Add(B,A)\). While \(\Fn(A,2,B)\) is not used in this paper, it is used in \cite{kunen_maximal_1983}, to which we sometimes refer.} Note that if \(\kappa\) is a cardinal and \(A\) is non-empty then \(\Add(\kappa,A)\) is \(\cf(\kappa)\)-closed and has the \((\kappa^{{<}\kappa})^+\)-c.c.

\begin{defn}
Given a forcing iteration \(\tup{\bbP_\alpha,\ddbbQ_\alpha\mid\alpha<\delta}\) of limit length \(\delta\), the \emph{inverse limit} of the system is the notion of forcing \(\bbP\) with conditions given by functions \(p\) with domain \(\delta\) such that, for all \(\gamma<\delta\), \(p\res\gamma\in\bbP_\gamma\). The ordering is given by \(q\leq p\) if for all \(\gamma<\delta\), \(q\res\gamma\leq_\gamma p\res\gamma\). The \emph{direct limit} of the system is the notion of forcing \(\bbP\) with conditions given by functions \(p\) with domain \(\gamma\) for some \(\gamma<\delta\) such that \(p\in\bbP_\gamma\). The ordering is given by \(q\leq p\) if \(\gamma=\dom(p)\leq\dom(q)\) and \(q\res\gamma\leq_\gamma p\). By an \emph{Easton support iteration} we mean that for all limit ordinals \(\alpha\), \(\bbP_\alpha\) is taken as an inverse limit if \(\alpha\) is singular, and a direct limit otherwise.
\end{defn}

\begin{defn}
Given two notions of forcing \(\bbP\) and \(\bbQ\), a function \(\psi\colon\bbP\to\bbQ\) is a \emph{dense embedding} if:
\begin{enumerate}[label=\textup{(\roman*)}]
\item For all \(p,p'\in\bbP\), \(p\leq_\bbP p'\) if and only if \(\psi(p)\leq_\bbQ\psi(p')\); and
\item for all \(q\in\bbQ\) there is \(p\in\bbP\) such that \(\psi(p)\leq_\bbQ q\).
\end{enumerate}
Note that in this case we also have \(p\perp_\bbP p'\) if and only if \(\psi(p)\perp_\bbQ\psi(p')\).
\end{defn}

\subsection{Closure points and elementary embeddings}

In \cref{s:applications;ss:class-of-measurables} we shall wish to consider which elementary embeddings are found in a forcing extension. In our instance we are making use of an Easton-support iteration that, as is often the case for Easton-support iterations, admits no elementary embeddings that do not extend a ground-model embedding. To show this, we will make use of \emph{closure points}, as explored in \cite{hamkins_extensions_2003}.

\begin{defn}
A notion of forcing has a \emph{closure point} at \(\delta\) if it can be factored as \(\bbP_0\iter\ddbbP_1\), where \(\bbP_0\) is atomless, \(\abs{\bbP_0}\leq\delta\), and \(\1\forces_{\bbP_0}``\ddbbP_1\) is \({\leq}\delta\) strategically closed''.\footnote{We shall not define strategic closure here. A \(\delta^+\)-closed notion of forcing is \({\leq}\delta\) strategically closed, and we will only ever use this case. A definition can be found in \cite[Ch.~12, Definition~5.15]{handbook}.}
\end{defn}

The following result is a combination of Lemma~13 and Theorem~10 in \cite{hamkins_extensions_2003}. While Lemma~13 and Theorem~10 in \cite{hamkins_extensions_2003} are more powerful than what we present here, the \cref{prop:hamkins-closure-gives-unlifting} is all that we will need.

\begin{prop}[Hamkins]\label{prop:hamkins-closure-gives-unlifting}
If \(\bbP\) has a closure point at \(\delta\), \(G\subseteq\bbP\) is \(V\)-generic, and \(\calU \in V[G]\) is a normal measure on \(\kappa>\delta\), then \(\calU \cap V \in V\) is a normal measure on \(\kappa\).
\end{prop}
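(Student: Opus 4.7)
The plan is to invoke Hamkins' gap forcing theorem on the ultrapower embedding associated with \(\calU\). Working in \(V[G]\), I would form the ultrapower \(j = j_\calU\colon V[G] \to N\), noting \(\crit(j) = \kappa > \delta\). Combining Lemma~13 and Theorem~10 of \cite{hamkins_extensions_2003}, the closure point at \(\delta\) guarantees that the extension \(V\subseteq V[G]\) enjoys sufficient approximation and cover properties to unlift \(j\): one obtains a transitive class \(\bar N\subseteq V\) and a \(V\)-definable elementary embedding \(\bar j\colon V\to\bar N\) agreeing with \(j\res V\).

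From \(\bar j\) I would extract the derived measure
\begin{equation*}
\bar\calU \defeq \Set{A \in \power(\kappa)^V \mid \kappa \in \bar j(A)}.
\end{equation*}
Since \(\bar j\) is definable in \(V\), \(\bar\calU \in V\). For each \(A \in V \cap \power(\kappa)\) we have \(\bar j(A) = j(A)\), so \(A \in \bar\calU\) if and only if \(\kappa \in j(A)\) if and only if \(A \in \calU\); hence \(\bar\calU = \calU \cap V\).

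The remaining step is the standard verification that \(\bar\calU\) is a normal \(\kappa\)-complete ultrafilter on \(\kappa\) in \(V\): ultrafilter-ness from \(\bar j(\kappa\setminus A) = \bar j(\kappa)\setminus\bar j(A)\), \(\kappa\)-completeness from \(\crit(\bar j)=\kappa\) (so \(\bar j\) fixes any union of fewer than \(\kappa\) members of \(\bar\calU\) enough to keep \(\kappa\) in the image), and normality from applying \(\bar j\) to a regressive \(f\colon\kappa\to\kappa\) on a set in \(\bar\calU\) and reading off \(\bar j(f)(\kappa) < \kappa\) to locate a constant value on a large set. The hard part lives entirely inside the cited Hamkins machinery, which provides the unlifting of \(j\); the work here amounts to packaging that unlifted embedding as a derived measure and recognising that it recovers precisely \(\calU\cap V\).
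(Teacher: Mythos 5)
Your proposal matches the paper's treatment: the paper offers no independent proof, stating only that the proposition is obtained by combining Lemma~13 and Theorem~10 of \cite{hamkins_extensions_2003}, which is precisely the derivation you spell out (the closure point yields the approximation and cover properties, Theorem~10 unlifts \(j_\calU\res V\) to a \(V\)-amenable embedding, and the derived measure of that embedding is \(\calU\cap V\)). The remaining verifications you sketch (ultrafilter, \(\kappa\)-completeness, normality from the restricted embedding) are standard and correct.
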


\section{Hammers}\label{s:tools}

We have two main tools at our disposal that work together to produce models in which there is a \(\theta\)-independent family on a set \(X\). Let us begin with \cref{lem:ideal-gives-family}, a method for showing that maximal \(\theta\)-independent families exist contingent on the presence of a particular ideal \(\calI\). The proof and result are essentially due to Kunen in the form of \cite[Lemma~2.1]{kunen_maximal_1983}, but we have softened the requirements.

\begin{lem}\label{lem:ideal-gives-family}
Let \(\theta\) be a regular uncountable cardinal, \(X\) a set with \(\abs{X}\geq\theta\), and \(\calI\) a \(\theta\)-complete ideal over \(X\) such that \(\Add(\theta,2^X)\) densely embeds into \(\power(X)/\calI\). Then there is a maximal \(\theta\)-independent family \(\calA\subseteq\power(X)\).
\end{lem}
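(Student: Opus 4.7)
The plan is two-step: first build an auxiliary $\theta$-independent family $\calA_0$ directly from the dense embedding $\psi$, then extend it to a maximal family. The core construction is a translation via $\psi$; the maximality step is where the real work lies.

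For the core family, for each $(\alpha,\xi)\in 2^X\times\theta$ let $c^\epsilon_{\alpha,\xi}\in\Add(\theta,2^X)$ be the atomic condition with domain $\{(\alpha,\xi)\}$ and value $\epsilon\in\{0,1\}$. Since $c^0_{\alpha,\xi}\perp c^1_{\alpha,\xi}$ and every condition extends to decide that coordinate, a short density argument using that $\psi$ preserves incompatibility shows $\psi(c^0_{\alpha,\xi})\vee\psi(c^1_{\alpha,\xi})=[X]_\calI$, so we may pick $A_{\alpha,\xi}\subseteq X$ with $[A_{\alpha,\xi}]_\calI=\psi(c^1_{\alpha,\xi})$ and $[X\setminus A_{\alpha,\xi}]_\calI=\psi(c^0_{\alpha,\xi})$, setting $\calA_0=\{A_{\alpha,\xi}\}$. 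A parallel density argument shows $\calI$ must be non-trivial: $\Add(\theta,2^X)$ is atomless, so $\power(X)/\calI$ has no atoms, forcing every singleton into $\calI$ and (by $\theta$-completeness) all $<\theta$ subsets. For the $\theta$-independence of $\calA_0$, given a partial $p$ on $\calA_0$ with $|p|<\theta$, associate $\hat p\in\Add(\theta,2^X)$ with $\hat p(\alpha,\xi)=p(A_{\alpha,\xi})$; using $\theta$-completeness of $\calI$ (making $\power(X)/\calI$ a $\theta$-complete Boolean algebra) and density of $\psi$ to match meets in both directions, one obtains $[\calA_0^p]_\calI=\psi(\hat p)\neq 0$, whence $\calA_0^p\notin\calI$ and so is non-empty.

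The main obstacle is maximality. Zorn's Lemma does not directly apply, because unions of $\theta$-independent families along chains of cofinality less than $\theta$ may fail $\theta$-independence — an $\omega$-chain of nested families can have a union with empty intersection under an all-$1$ assignment. My plan is a transfinite construction: enumerate $\power(X)=\{Y_\gamma:\gamma<2^{|X|}\}$ and build increasing $\calA_\gamma\supseteq\calA_0$, at each stage either absorbing $Y_\gamma$ into the family — possibly by redefining a representative $A_{\alpha,\xi}$ if $[Y_\gamma]_\calI=\psi(c^1_{\alpha,\xi})$ for some coordinate — or producing a $<\theta$-support killer for $Y_\gamma$ via density of $\psi$ (which furnishes a condition $q$ with $\psi(q)\leq[Y_\gamma]_\calI$ whose support encodes the killer). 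Controlling the limit stages of cofinality less than $\theta$ is the delicate point, and my strategy would be to exploit the saturation of $\calI$ inherited from the $(\theta^{<\theta})^+$-c.c.\ of $\Add(\theta,2^X)$ together with careful bookkeeping so that only "controllable" sets are introduced before each problematic limit, ensuring $\theta$-independence persists through the union.
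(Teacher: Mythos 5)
Your construction and verification of the $\theta$-independent family $\calA_0$ is essentially the paper's argument: choose representatives $A_{f,\delta}$ with $[A_{f,\delta}]_\calI=\psi(c^1_{f,\delta})$, check $\psi(c^0_{f,\delta})=[X\setminus A_{f,\delta}]_\calI$ by a density/incompatibility argument, and use $\theta$-completeness of $\calI$ together with density of $\psi$ to get $[\calA_0^p]_\calI=\psi(\hat p)\neq[\emptyset]_\calI$. That half is sound (the reverse inequality $\psi(\hat p)\geq[\calA_0^p]_\calI$ deserves the explicit density argument the paper gives, but your sketch points at the right mechanism). The maximality step, however, is where the proof actually lives, and there you have correctly located the central difficulty without resolving it. To kill a candidate set $Y$ you must produce $q$ with $\calA^q\subseteq Y$ or $\calA^q\subseteq X\setminus Y$ \emph{literally}; density only hands you $q$ with $[\calA^q]_\calI\leq[Y]_\calI$, i.e.\ $\calA^q\setminus Y\in\calI$, and that residual null set is exactly the obstruction. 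Your recursion has no mechanism for removing it: the ``absorb'' branch applies only in the exceptional case $[Y_\gamma]_\calI=\psi(c^1_{\alpha,\xi})$, redefining representatives mid-recursion threatens the kills already performed at earlier stages, and the limit stages of cofinality below $\theta$ are deferred to unspecified bookkeeping. As written the plan does not close, and I do not see how to make the recursive scheme work.

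The paper's resolution needs no recursion and no limit-stage analysis. Enumerate $\calI$ as $\Set{C_f\mid f\in2^X}$ (possible since $\abs{\calI}\leq2^{\abs{X}}$) and replace each representative by $A'_{f,\delta}=A_{f,\delta}\setminus C_f$; since the representatives were arbitrary, the identity $\psi(p)=[\vphi'(p)]_\calI$ still holds for the perturbed family. Now given $Y\notin\calI$, take $q$ with $[\vphi'(q)]_\calI\leq[Y]_\calI$; then $\vphi'(q)\setminus Y$ equals $C_f$ for some $f$, and since $\abs{q}<\theta$ there is $\delta$ with $(f,\delta)\notin\dom(q)$, so extending $q$ by the atomic condition $(f,\delta)\mapsto1$ intersects with $A'_{f,\delta}\subseteq X\setminus C_f$ and yields $\vphi'(q')\subseteq Y$ on the nose. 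Applying this to whichever of $Y$, $X\setminus Y$ is not in $\calI$ kills every candidate, so the single family $\Set{A'_{f,\delta}\mid f\in2^X,\ \delta<\theta}$ is already maximal. Your instinct to ``redefine a representative'' is the germ of this idea; the correct move is to perturb every representative once, up front, so that each null set is coded into $\theta$ many members of the family and is therefore always available to be subtracted.
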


\begin{proof}
Let \(\bbP=\Add(\theta,2^X)\) and \(\psi\colon\bbP\to\power(X)/\calI\) be a dense embedding. For all \(f\in2^X\) and \(\delta<\theta\) choose \(A_{f,\delta}\subseteq X\) such that \([A_{f,\delta}]=\psi(\stfd1)\) and define \(\vphi\colon\bbP\to\power(X)\) by
\begin{equation}\label{eqn:vphi-p}
\vphi(p)=\bigcap\Set*{A_{f,\delta}\mid p(f,\delta)=1}\cap\bigcap\Set*{X\setminus A_{f,\delta}\mid p(f,\delta)=0}.\tag{\ensuremath{*}}
\end{equation}
\begin{claim}\label{claim:psi-is-vphi}
For all \(p\in\bbP\), \(\psi(p)=[\vphi(p)]\).
\end{claim}
\begin{poc}
We shall first show that \(\psi(\stfd0)=[X\setminus A_{f,\delta}]\). Let \(B_{f,\delta}\) be chosen so that \(\psi(\stfd0)=[B_{f,\delta}]\). Since \(\stfd0\perp\stfd1\), \(A_{f,\delta}\cap B_{f,\delta}\in\calI\). For all \(A\notin\calI\) there is \(p\in\bbP\) such that \(\psi(p)\leq[A]\), and either \(p\comp\stfd0\) or \(p\comp\stfd1\). Hence, setting \([C]=\psi(p)\), we have that \(C\cap A_{f,\delta}\notin\calI\) or \(C\cap B_{f,\delta}\notin\calI\). In particular, letting \(D=X\setminus(A_{f,\delta}\cup B_{f,\delta})\), we have \(D\cap A_{f,\delta}=D\cap B_{f,\delta}=\emptyset\in\calI\), so \(D\in\calI\). That is, \([A_{f,\delta}\cup B_{f,\delta}]=[X]\) and so \([B_{f,\delta}]=[X\setminus A_{f,\delta}]\) as required.

Therefore, for all \(p\in\bbP\), \(\psi(p)\leq[A_{f,\delta}]\) whenever \(p(f,\delta)=1\) and \(\psi(p)\leq[X\setminus A_{f,\delta}]\) whenever \(p(f,\delta)=0\). Given that \(\abs{p}<\theta\) and \(\calI\) is \(\theta\)-complete, this means that \(\psi(p)\leq[\vphi(p)]\). Setting \(\psi(p)=[A]\), if \([A]<[\vphi(p)]\) then \(\vphi(p)\setminus A\notin\calI\) and so there is \(q\in\bbP\) such that \(\psi(q)\leq[\vphi(p)\setminus A]\). In particular, \(\psi(q)\leq[A_{f,\delta}]=\psi(\stfd1)\) whenever \(p(f,\delta)=1\) and \(\psi(q)\leq[X\setminus A_{f,\delta}]=\psi(\stfd0)\) whenever \(p(f,\delta)=0\). That is, \(q\leq p\) and thus \(\psi(q)\leq[A]\). However, this cannot be the case since \([\vphi(p)\setminus A]\perp[A]\).
\end{poc}
Let \(\calA=\Set{A_{f,\delta}\mid f\in2^X,\,\delta<\theta}\). Then for all \(p\in\bbP\), \([\vphi(p)]=\psi(p)\neq[\emptyset]\), so \(\vphi(p)\notin\calI\) and thus \(\calA\) is \(\theta\)-independent. Furthermore, for all \(A\notin\calI\) there is \(p\in\bbP\) such that \(\psi(p)=[\vphi(p)]\leq[A]\), and thus for all \(A\subseteq X\) there is \(p\in\bbP\) such that \([\vphi(p)]\leq[A]\) or \([\vphi(p)]\leq[X\setminus A]\). However, this is not quite true maximality, as we would require that for all \(A\subseteq X\) there is \(p\in\bbP\) such that \(\vphi(p)\subseteq A\) or \(\vphi(p)\subseteq X\setminus A\). To achieve this we alter \(\calA\) slightly.

Enumerate \(\calI\) as \(\Set{C_f\mid f\in2^X}\) (with repeat entries if necessary) and define \(A_{f,\delta}'=A_{f,\delta}\setminus C_f\). Since the representatives \(A_{f,\delta}\in\psi(\stfd1)\) were chosen arbitrarily, \cref{claim:psi-is-vphi} still holds for \(\calA'=\Set{A_{f,\delta}'\mid f\in2^X,\,\delta<\theta}\), where we define \(\vphi'\) analogously to \(\vphi\) in \cref{eqn:vphi-p}. Hence, if \(A\notin\calI\) then there is \(p\in\bbP\) such that \([\vphi'(p)]\leq[A]\), so \(\vphi'(p)\setminus A=C_f\in\calI\). Since \(\abs{p}<\theta\) there is \(\delta<\theta\) such that \(\tup{f,\delta}\notin\dom(p)\), and hence \(\vphi'(p\cup\stfd1)\subseteq\vphi'(p)\setminus C_f\subseteq A\) as required.
\end{proof}

\begin{rk}
The statement of \cref{lem:ideal-gives-family} is, on the surface, a strengthening of Kunen's result, as we have removed two requirements (both \(2^{{<}\theta}=\theta\) and that \(\calI\) is \(\theta^+\)-saturated) and weakened further requirements (we only need \(\calI\) to be \(\theta\)\nobreakdash-complete, rather than \(\abs{X}\)-complete, and only demand that there is a dense embedding of \(\Add(\theta,2^X)\) into \(\power(X)/\calI\), rather than an isomorphism). This weakening is partially illusory. The proof of \cref{lem:ideal-gives-family} in \cite{kunen_maximal_1983} makes little use of some of these extraneous assumptions, and some of these requirements that we have altered are consequences: By \cref{thm:family-gives-ideal} it will be the case that \(2^{{<}\theta}=\theta\) and, since \(\Add(\theta,2^X)\) densely embeds into \(\power(X)/\calI\), we recover that \(\calI\) is \(\theta^+\)-saturated by the chain condition.
\end{rk}

Our second tool is an old technique present in \cite{kunen_maximal_1983} (among many other places) for obtaining ideals \(\calI\) on \(X\) such that \(\power(X)/\calI\) is a complete Boolean algebra isomorphic to a desired notion of forcing. This method is closely tied to the idea of lifting elementary embeddings: If \(j\colon V\to M\) is an elementary embedding and \(G\) is \(V\)-generic, then one can lift the elementary embedding to \(\hj\colon V[G]\to M[j(G)]\), where \(j(G)\) is an appropriate \(M\)-generic filter. However, if \(j\) was definable in \(V\) and \(j(G)\notin V[G]\), then we may be unable to lift the embedding definably in \(V[G]\). \cref{thm:duality-prime} can be understood intuitively as the idea that if \(j=j_\calU\) is an ultrapower embedding and \(G\subseteq\bbP\) is \(V\)-generic, then \(\calI=\tup{\calU^\ast}^{V[G]}\) will be a prime ideal only if \(j\) lifts to \(V[G]\), and if it does not then \(\power(X)/\calI\) is the extra amount of forcing required to lift the embedding: \(\bbP\iter\power(X)/\calI\cong j(\bbP)\).

This technique has been the subject of much refinement, culminating in Foreman's \emph{Duality Theorem}, from \cite{foreman_calculating_2013}, which bring precipitous ideals and a more refined definition of \(\calI\) into the fold. We do not need quite the level of complexity that the Duality Theorem affords, and so we shall present a specialised version that is localised to prime ideals, the scenario that we have described. For the case that \(\ddbbR\) is forced to be trivial, one can follow the technique of \cite{kunen_maximal_1983} in which the special case of \(\bbP=\Add(\omega_1,\kappa)\) was applied\footnote{Rather, the special case \(\bbP=\Fn(\kappa,2,\omega_1)\), but these are isomorphic.} to see a proof.

\begin{thm}\label{thm:duality-prime}
Let \(\calU\) be a \(\sigma\)-complete ultrafilter on a set \(X\) with ultrapower embedding \(j=j_\calU\colon V\to M\). Let \(\bbP\) be a notion of forcing such that \(j(\bbP)\cong\bbP\iter\ddbbQ\iter\ddbbR\) by an isomorphism \(\pi\) satisfying \(\pi(j(p))=\tup{p,\1,\1}\). Suppose that, for all \(V\)-generic \(G\iter H\subseteq\bbP\iter\ddbbQ\), there is \(M[G\iter H]\)-generic \(F\subseteq\ddbbR^{G\iter H}\) with \(F\in V[G\iter H]\). Then there is a \(\bbP\)-name for an ideal \(\dot{\calI}\) on \(X\) such that \(\1\forces_{\bbP}\power(\check{X})^\bullet/\dot{\calI}\cong B(\ddbbQ)\).

In the special case that \(\ddbbR\) is forced to be the trivial forcing, \(\calI\) is in fact \(\tup{\calU^\ast}^{V[G]}\), the ideal generated by \(\calU^\ast\) in the extension. Hence, if \(\bbP\) is \(\theta\)-distributive and \(\calU\) is \(\theta\)-complete then \(\calI\) will be \(\theta\)-complete as well.
\end{thm}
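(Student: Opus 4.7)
The plan is to follow the classical lifted-ultrapower construction, with the extra factor $\ddbbR$ absorbed by the hypothesis that its generic is recoverable inside $V[G \iter H]$. Fix a $V$-generic $G \subseteq \bbP$ and a $V[G]$-generic $H \subseteq \ddbbQ^G$. The hypothesis furnishes $F \in V[G \iter H]$ that is $M[G \iter H]$-generic for $\ddbbR^{G \iter H}$, and pulling back through $\pi$ gives $\tilde G \defeq \pi^{-1}(G \iter H \iter F)$, an $M$-generic for $j(\bbP)$. The condition $\pi(j(p)) = \tup{p,\1,\1}$ ensures $j``G \subseteq \tilde G$ automatically, so by the standard lifting criterion $j$ extends to an elementary $\hat j \colon V[G] \to M[\tilde G]$ living in $V[G \iter H]$, via $\hat j(\dot\tau^G) = j(\dot\tau)^{\tilde G}$.

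In $V[G]$, define $e \colon \power(X)^{V[G]} \to B(\ddbbQ^G)$ by letting $e(A)$ be the Boolean value of the statement ``$[\id]_\calU \in \hat j(A)$''. Although $\hat j$ depends on both $H$ and $F$, recoverability of $F$ from $V[G \iter H]$ is precisely what forces this Boolean value to live in $B(\ddbbQ^G)$ rather than in $B((\ddbbQ \iter \ddbbR)^G)$. Let $\dot{\calI}$ be a $\bbP$-name for $\calI \defeq e^{-1}(\Set{0}) = \Set{A \subseteq X \mid \1 \forces_{\ddbbQ^G} [\id]_\calU \notin \hat j(A)}$; closure of $\calI$ under subsets and finite unions is immediate from monotonicity of $\hat j$ and the Boolean identities satisfied by $e$.

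The main content is verifying that $e$ descends to a Boolean isomorphism $\bar e \colon \power(X)^{V[G]}/\calI \to B(\ddbbQ^G)$. Injectivity and preservation of meets, joins, and complements reduce to elementarity of $\hat j$ together with $\sigma$-completeness of $\calU$. Surjectivity is the main obstacle: given $q \in B(\ddbbQ^G)$, one must produce $A \subseteq X$ with $e(A) = q$. For this, exploit the Łoś-style representation that every element of $M[\tilde G]$ is of the form $\hat j(f)([\id]_\calU)$ for some $f \in V[G]$ with $\dom(f) = X$; applied to a canonical name for $q$ arising from the middle coordinate of $\pi^{-1}$, this extracts a suitable $A$. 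The calculation parallels Kunen's original surjectivity argument for the case $\bbP = \Fn(\kappa,2,\omega_1)$.

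For the special case where $\ddbbR$ is forced trivial, $\tilde G \cong G \iter H$ directly and $\hat j$ already lives in $V[G \iter H]$. To identify $\calI = \tup{\calU^\ast}^{V[G]}$: if $A \subseteq Y$ for some $Y \in \calU^\ast$ (so $Y \in V$ and $Y \notin \calU$), then $\hat j(A) \subseteq j(Y)$ and $[\id]_\calU \notin j(Y)$, so $e(A) = 0$ and $A \in \calI$; the converse, extracting a cover $Y \in \calU^\ast$ from any $A \in \calI$, again uses the Łoś representation of $\hat j(A)$. Finally, when $\bbP$ is $\theta$-distributive and $\calU$ is $\theta$-complete, $\theta$-distributivity puts any $<\theta$-indexed sequence of covering sets back into $V$, where $\theta$-completeness of $\calU$ yields a joint cover in $\calU^\ast$, giving $\theta$-completeness of $\calI$.
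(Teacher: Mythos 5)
The paper offers no proof of this theorem: it explicitly defers to Foreman's Duality Theorem and, for the case that \(\ddbbR\) is trivial, to Kunen's original argument for \(\bbP=\Fn(\kappa,2,\omega_1)\). Your outline is the standard lifting argument that those sources contain --- recover \(F\) inside \(V[G\iter H]\), transfer \(G\iter H\iter F\) through \(\pi\) to an \(M\)-generic \(\tilde{G}\subseteq j(\bbP)\), use \(\pi(j(p))=\tup{p,\1,\1}\) to get \(j``G\subseteq\tilde{G}\) and lift to \(\hj\colon V[G]\to M[\tilde{G}]\), then let \(\calI\) be the kernel of \(A\mapsto\lVert[\id]_\calU\in\hj(A)\rVert_{B(\ddbbQ^G)}\) --- so there is no methodological divergence to report, and the special-case identification of \(\calI\) with \(\tup{\calU^\ast}^{V[G]}\) (forward inclusion) and the closing \(\theta\)-completeness argument are correct as written.

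The one place where the sketch is too thin to check, and where the most natural reading of what you wrote would fail, is surjectivity. If you literally represent \(q=\hj(f)([\id]_\calU)\) with \(f\in V[G]\) and set \(A=\Set{x\in X\mid f(x)\in H}\), then \([\id]_\calU\in\hj(A)\) is equivalent to \(q\in\hj(H)\), not to \(q\in H\), and the computation collapses. The correct move is Kunen's, using the \emph{unlifted} representation: a condition in the middle coordinate is an element of \(j(\bbP)\in M\), hence of the form \(j(\vec{q})([\id]_\calU)\) for some \(\vec{q}\colon X\to j(\bbP)\) in \(V\); setting \(A=\Set{x\in X\mid\vec{q}(x)\in G}\) (via the name \(\Set{\tup{\vec{q}(x),\check{x}}\mid x\in X}\)) gives \(\hj(A)=\Set{y\mid j(\vec{q})(y)\in\tilde{G}}\), so \([\id]_\calU\in\hj(A)\) iff \(q\in\tilde{G}\) iff the \(\ddbbQ\)-coordinate of \(\pi(q)\) lies in \(H\), whose Boolean value is exactly the corresponding element of \(B(\ddbbQ^G)\). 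Two further points are asserted rather than argued: this computation only shows the range of \(e\) is \emph{dense} in \(B(\ddbbQ)\), and upgrading a dense embedding to the claimed isomorphism requires knowing that \(\power(X)^{V[G]}/\calI\) is itself a complete Boolean algebra (a point the paper also leaves to its references); and the converse inclusion \(\calI\subseteq\tup{\calU^\ast}^{V[G]}\) in the special case is the nontrivial half and needs the same \(j(\vec{b})([\id]_\calU)\)-style analysis of the name for \(A\), not just an appeal to \L{}o\'{s}.
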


\section{Nails}\label{s:applications}

Let us now apply our tools to produce examples of maximal \(\theta\)-independent families, beginning with \cref{thm:kunens-theorem-applicable}. Our presentation of this result is a slight extension of Kunen's original method to allow for general regular uncountable \(\theta\). 

\begin{thm}[{\cite[Theorem~2]{kunen_maximal_1983}}]\label{thm:kunens-theorem-applicable}
Let \(\kappa\) be a measurable cardinal, \(\theta<\kappa\) be uncountable and regular, and \(G\) be \(V\)-generic for \(\Add(\theta,\kappa)\). Then there is a maximal \(\theta\)-independent family \(\calA\subseteq\power(2^\theta)\) in \(V[G]\).
\end{thm}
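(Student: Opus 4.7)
The plan is to combine the two hammers from \cref{s:tools}: applying \cref{thm:duality-prime} (with \(\ddbbR\) trivial) to obtain a \(\theta\)-complete ideal \(\calI\) on \(\kappa\) in \(V[G]\) with a prescribed quotient algebra, then invoking \cref{lem:ideal-gives-family} to extract the family. Fix a normal measure \(\calU\) on \(\kappa\) with ultrapower embedding \(j = j_\calU \colon V \to M\), so that \(M\) is \(\kappa\)-closed in \(V\). Since \(\crit(j) = \kappa > \theta\), we have \(j(\theta) = \theta\) and \(j(\bbP) = \Add(\theta, j(\kappa))^M\); splitting the ordinal domain \(j(\kappa) = \kappa \sqcup (j(\kappa) \setminus \kappa)\) and exploiting \(\kappa\)-closure of \(M\) (together with \(\abs{p} < \theta < \kappa\) for each condition, so that \(\Add(\theta, \kappa)^M = \Add(\theta, \kappa)^V\)) yields an isomorphism \(j(\bbP) \cong \bbP \iter \ddbbQ\), where \(\ddbbQ\) is the check name of \(\Add(\theta, j(\kappa) \setminus \kappa)\). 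Because every \(p \in \bbP\) is pointwise fixed by \(j\), this isomorphism sends \(j(p) = p\) to \(\tup{p, \1}\), so the hypothesis of \cref{thm:duality-prime} is satisfied.

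\cref{thm:duality-prime} then produces a \(\bbP\)-name \(\dot\calI\) for the ideal \(\tup{\calU^\ast}^{V[G]}\) on \(\kappa\), satisfying \(\1 \forces_\bbP \power(\check\kappa)^\bullet / \dot\calI \cong B(\ddbbQ)\); since \(\bbP\) is \(\theta\)-closed and \(\calU\) is \(\theta\)-complete, \(\calI\) is \(\theta\)-complete in \(V[G]\). To invoke \cref{lem:ideal-gives-family} with \(X = (2^\theta)^{V[G]}\), we must show that \(\Add(\theta, 2^X)^{V[G]}\) densely embeds into \(\power(X)^{V[G]}/\calI\). The relevant cardinal arithmetic: since \(\kappa\) is inaccessible, \(\theta^{{<}\theta} < \kappa\), so \(\bbP\) has the \(\kappa\)-c.c.; nice-name counting then gives \((2^\theta)^{V[G]} = \kappa\) (so \(X = \kappa\)) and \((2^\kappa)^{V[G]} = (2^\kappa)^V\). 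The injection \(Y \mapsto [\alpha \mapsto \mathrm{code}(Y \cap \alpha)]_\calU\) from \(\power(\kappa)\) into \(j(\kappa)\), with codes staying below \(\kappa\) by strong-limit-ness of \(\kappa\), shows \(\abs{j(\kappa)}^V = 2^\kappa\), preserved to \(V[G]\). Hence \(\ddbbQ^G\) and \(\Add(\theta, 2^X)^{V[G]}\) are both \(\Add\) posets with index sets of cardinality \(2^\kappa\) in \(V[G]\), and as free \(\theta\)-complete Boolean algebras on the same number of generators, they share a Boolean completion. The desired dense embedding follows.

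The main technical obstacle is this cardinality-and-completion identification: confirming \(\abs{j(\kappa)}^V = (2^\kappa)^{V[G]}\) and that the two \(\Add\) posets share a Boolean completion in \(V[G]\). Once settled, \cref{lem:ideal-gives-family} delivers the maximal \(\theta\)-independent family \(\calA \subseteq \power(\kappa) = \power(2^\theta)\) in \(V[G]\).
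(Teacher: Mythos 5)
Your proposal is correct and follows essentially the same route as the paper's proof: factor \(j(\Add(\theta,\kappa))\) as \(\Add(\theta,\kappa)\) times \(\Add(\theta,j(\kappa)\setminus\kappa)\) with \(j(p)=p\mapsto\tup{p,\1}\), apply \cref{thm:duality-prime} with trivial \(\ddbbR\) to obtain the \(\theta\)-complete ideal \(\tup{\calU^\ast}^{V[G]}\) on \(\kappa=(2^\theta)^{V[G]}\), and conclude via \cref{lem:ideal-gives-family}. The cardinality identification you flag as the main obstacle is resolved exactly as you suggest (the paper uses the standard bounds \(2^\kappa<j(\kappa)<(2^\kappa)^+\) where you use a coding injection, and handles the Boolean-completion point by noting that \(\theta\)-closure of \(\bbP\) makes \(\Add(\theta,X)^V=\Add(\theta,X)^{V[G]}\) with isomorphism type depending only on \(\abs{X}\)).
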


\begin{proof}
Let \(\kappa\) have measure \(\calU\) and ultrapower embedding \(j=j_\calU\colon V\to M\), so \(\crit(j)=\kappa\). Then in \(V[G]\) we have \(\theta^{{<}\theta}=\theta\), \(2^\theta=\kappa\), and
\begin{align*}
j(\Add(\theta,\kappa))&=\Add(\theta,j(\kappa))\\
&\cong\Add(\theta,j``\kappa)\times\Add(\theta,j(\kappa)\setminus j``\kappa)\\
&\cong\Add(\theta,\kappa)\times\Add(\theta,j(\kappa)\setminus\kappa)\times\Set{\1}.
\end{align*}
Since each \(p\in\Add(\theta,\kappa)\) has cardinality less than \(\theta\) we have that \({j(p)=j``p=p}\). Furthermore, since \(M\) is \(\kappa^+\)-closed, \(\Add(\theta,j(\kappa))^M=\Add(\theta,j(\kappa))^V\). By \cref{thm:duality-prime}, setting \({\calI=\tup{\calU^\ast}^{V[G]}}\), we have \(\power(\kappa)/\calI\cong B(\Add(\theta,j(\kappa)\setminus\kappa))\). Note here that since \(\Add(\theta,\kappa)\) is \(\theta\)-closed, \(\Add(\theta,X)^V=\Add(\theta,X)^{V[G]}\) for all \(X\in V\) and so the isomorphism class of \(\Add(\theta,X)\) (in either \(V\) or \(V[G]\)) depends only on the cardinality of \(X\). Furthermore, since \(\Add(\theta,\kappa)\) is \(\theta\)-closed, \(\calI\) is \(\theta\)-complete.

Finally, since \(\kappa\) is measurable, \(2^\kappa<j(\kappa)<(2^\kappa)^+\) and so \(\Add(\theta,j(\kappa)\setminus\kappa)\) is isomorphic to \(\Add(\theta,2^\kappa)\). Therefore, by \cref{lem:ideal-gives-family}, there is a maximal \(\theta\)-independent family \(\calA\subseteq\power(\kappa)=\power(2^\theta)\) in \(V[G]\).
\end{proof}

Hence, if \(\ZFC\) plus the existence of a measurable cardinal is consistent, then so is \(\ZFC\) plus the existence of a maximal \(\sigma\)-independent family on \(2^{\omega_1}\). Furthering this, Kunen recovers the consistency of a measurable cardinal from the consistency of a maximal \(\theta\)-independent family.

\begin{thm}[{\cite[Theorem~1]{kunen_maximal_1983}}]\label{thm:family-gives-ideal}
Let \(\theta\) be an uncountable regular cardinal such that there is a maximal \(\theta\)-independent family \(\calA\subseteq\power(\lambda)\). Then \(2^{{<}\theta}=\theta\) and, for some \(\kappa\) such that \(\sup\Set{(2^\alpha)^+\mid\alpha<\theta}\leq\kappa\leq\min\Set{\lambda,2^\theta}\), there is a non-trivial \(\theta^+\)-saturated \(\kappa\)-complete ideal over \(\kappa\).
\end{thm}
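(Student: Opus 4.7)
The plan is to invert the construction of \cref{lem:ideal-gives-family}: given the maximal \(\theta\)-independent family \(\calA\) on \(\lambda\), I aim to build a non-trivial \(\theta\)-complete ideal \(\calI\) on \(\lambda\) such that \(p\mapsto[\calA^p]\) is a dense embedding of \(\bbP\) into \(\power(\lambda)/\calI\), where \(\bbP\) is the poset of partial functions \(p\colon\calA\to 2\) of size less than \(\theta\) ordered by reverse extension and \(\calA^p\) is the basic set from the definition of \(\theta\)-independence. From such a dense embedding the \(\theta^+\)-saturation of \(\calI\) and the cardinal arithmetic conclusions will follow by standard moves.

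For the ideal, the first candidate is the collection \(\calI_0\) of all \(A\subseteq\lambda\) such that \(\Set{p\in\bbP\mid\calA^p\cap A=\emptyset}\) is dense in \(\bbP\); this is the \(\theta\)-complete ideal of sets that are ``nowhere dense'' relative to the \(\bbP\)-topology on \(\lambda\) whose basic opens are the \(\calA^p\). Its \(\theta\)-completeness uses the \(\theta\)-closure of \(\bbP\) to thread through fewer than \(\theta\) density requirements along a descending chain, and non-triviality uses \(\theta\)-independence of \(\calA\), which precludes any single point from lying in every \(\calA^p\). If necessary, enlarge \(\calI_0\) to a maximal \(\theta\)-complete ideal \(\calI\) not containing any \(\calA^p\) via Zorn.

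The main obstacle is density of the embedding. For \(A\notin\calI\), applying maximality of \(\calA\) to \(\calA\cup\Set{A}\) (when \(A\notin\calA\)) yields a partial function \(p\) and an \(i\in 2\) with either \(\calA^p\cap A=\emptyset\) or \(\calA^p\subseteq A\); only the latter directly gives \([\calA^p]\leq[A]\) in \(\power(\lambda)/\calI\). Promoting this lone witness to a dense embedding --- that is, excluding the first alternative cofinally often below some \(p_0\), then threading maximality through the \(\theta\)-complete closure of \(\calI\) to produce a genuine cofinal family of such \(p\) --- is the technical heart of the argument and directly parallels the \(\vphi'\)-adjustment at the end of the proof of \cref{lem:ideal-gives-family}.

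With the dense embedding in hand, the remaining conclusions fall out. For each \(\calA_0\subseteq\calA\) of size \(\alpha<\theta\), the \(2^\alpha\) total functions \(\calA_0\to 2\) form pairwise incompatible conditions in \(\bbP\), embedding as an antichain of size \(2^\alpha\) in \(\power(\lambda)/\calI\); the \(\theta^+\)-saturation of \(\calI\) (coming from a \(\Delta\)-system argument on \(\bbP\), verified simultaneously with the arithmetic) then forces \(2^\alpha\leq\theta\), whence \(2^{{<}\theta}=\theta\). Finally, let \(\kappa\) be minimal such that \(\calI\) fails to be \(\kappa^+\)-complete; a witnessing partition of a positive set into \(\kappa\) null pieces transfers \(\calI\) to a non-trivial \(\theta^+\)-saturated \(\kappa\)-complete ideal on \(\kappa\), and the bounds \(\sup\Set{(2^\alpha)^+\mid\alpha<\theta}\leq\kappa\leq\min\Set{\lambda,2^\theta}\) follow from minimality of \(\kappa\) combined with saturation (any smaller partition would contradict the antichain bound above) and from the cardinalities \(\kappa\leq\lambda\) and \(\abs{\bbP}\leq 2^\theta\) respectively.
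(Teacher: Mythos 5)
Your architecture is the same as Kunen's (and the paper's): form the ideal generated by the basic sets \(\calA^p\), get a dense copy of the poset of small partial functions inside \(\power(\lambda)/\calI\), read off saturation and completeness, and localise to the least \(\kappa\) at which \(\kappa^+\)-completeness fails. But the step you yourself flag as ``the technical heart'' is genuinely missing, and your proposed repairs do not supply it. A single application of maximality to \(\calA\cup\Set{A}\) yields one witness \(p\) with \(\calA^p\subseteq A\) \emph{or} \(\calA^p\cap A=\emptyset\), and nothing rules out that for some positive \(A\) every witness is of the second kind; so density over all of \(\lambda\) can simply fail. The paper's fix is not an ideal-extension argument but a relativisation of the \emph{family}: one first proves (this is the unproved ``Fact'' in the sketch) that there is \(p_0\) such that \(\calA/p_0\) is \emph{globally maximal} --- for every \(q\supseteq p_0\) and every \(X\subseteq\calA^q\) there is \(r\supseteq q\) with \(\calA^r\subseteq X\) or \(\calA^r\cap X=\emptyset\) --- and then replaces \(\lambda\) by \(\calA^{p_0}\). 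Global maximality is exactly what makes the ``contains no basic set'' collection an ideal and makes the embedding dense; it is also why the conclusion only locates the ideal on some \(\kappa\leq\lambda\). Your alternative of enlarging \(\calI_0\) to a \emph{maximal} \(\theta\)-complete ideal via Zorn does not work as stated: the union of a chain of \(\theta\)-complete ideals need not be \(\theta\)-complete, so Zorn's lemma does not apply in that category, and even granting existence you would still have to re-establish that the resulting quotient is controlled by \(\bbP\).

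Two further points. First, your justification of \(\kappa\leq2^\theta\) from ``\(\abs{\bbP}\leq2^\theta\)'' is wrong: \(\abs{\bbP}=\abs{\calA}^{{<}\theta}\), and maximal \(\theta\)-independent families on \(\lambda\) typically have size \(2^\lambda\) (those produced by \cref{lem:ideal-gives-family} do), so \(\abs{\bbP}\) can vastly exceed \(2^\theta\); and even if the bound held it would not obviously yield non-\((2^\theta)^+\)-completeness. The paper's argument instead fixes \(\calA_0\in[\calA]^\theta\) and partitions \(\lambda\) into the \(2^\theta\) sets \(\calA^x\) for total \(x\colon\calA_0\to2\); each is null because no condition of size \(<\theta\) can decide all \(\theta\) members of \(\calA_0\). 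Second, deriving \(\theta^+\)-saturation ``from a \(\Delta\)-system argument on \(\bbP\)'' is circular: the \(\Delta\)-system lemma for \(\theta^+\) many conditions of size \(<\theta\) already needs \(2^{{<}\theta}=\theta\), which is precisely what you intend to extract from saturation via the \(2^\alpha\)-sized antichains. Saying the two are ``verified simultaneously'' does not break the circle; saturation needs an independent argument from maximality of the family.
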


The full proof may be found in \cite{kunen_maximal_1983}, with the roles of \(\kappa\) and \(\lambda\) swapped, but we shall sketch it here.

\begin{proof}[Sketch proof]
We say that a maximal \(\theta\)-independent family \(\calA\subseteq\power(\lambda)\) is \emph{globally maximal} if, setting \(P\) to be the set of partial functions \(p\colon\calA\to2\) with \(\abs{p}<\theta\),
\begin{equation*}
(\forall p\in P)(\forall X\subseteq\calA^p)(\exists q\supseteq p)(\calA^q\subseteq X \lor \calA^q \cap X = \emptyset).
\end{equation*}
\begin{factstar}
There is \(p\in P\) such that \(\calA/p=\Set{A \cap \calA^p \mid A \in \calA \setminus \dom(p)}\) is globally maximal \(\theta\)-independent.
\end{factstar}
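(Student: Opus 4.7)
The plan is to argue by contradiction: assume that for every \(p \in P\) the family \(\calA/p\) fails to be globally maximal, and derive a violation of the maximality of \(\calA\). Unpacking the definition, the assumption gives, for each \(p \in P\), some \(p' \supseteq p\) in \(P\) and some \(X \subseteq \calA^{p'}\) that is \emph{undecidable below} \(p'\), meaning no \(q \supseteq p'\) satisfies \(\calA^q \subseteq X\) or \(\calA^q \cap X = \emptyset\). The target is to manufacture a set \(Y \subseteq \lambda\) for which \(\calA \cup \Set{Y}\) is still \(\theta\)-independent, i.e., no \(q \in P\) decides \(Y\) at all.

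The key engine powering the argument is the following observation. If \(X \subseteq \calA^{p'}\) is undecidable below \(p'\), then any \(q \in P\) witnessing the maximality of \(\calA\) applied to the set \(X \subseteq \lambda\) (so \(\calA^q \subseteq X\) or \(\calA^q \cap X = \emptyset\)) must be \emph{incompatible} with \(p'\). Otherwise, \(q \cup p' \supseteq p'\) would inherit \(q\)'s decision about \(X\), contradicting undecidability. Each failure of global maximality thus converts into a concrete incompatibility between conditions, a fact that should be exploited to force new disagreements as the construction progresses.

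Proceeding by a recursion of length \(\theta\), I would build an increasing chain \(p_0 \subseteq p_1 \subseteq \cdots\) in \(P\) together with sets \(X_\alpha \subseteq \calA^{p_{\alpha+1}}\), each undecidable below \(p_{\alpha+1}\), with limits formed by union; regularity of \(\theta\) ensures each \(p_\alpha\) stays in \(P\). An enumeration \(\Set{q_\alpha \mid \alpha<\abs{P}}\) of \(P\) should be woven into the recursion so that by stage \(\alpha\) we have arranged the data to prevent \(q_\alpha\) from deciding the eventual \(Y\).

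The main obstacle is the assembly of the contradictory \(Y\) and the verification that no \(q \in P\) decides it. A natural candidate is some Boolean combination of the \(X_\alpha\)'s designed to cut every \(\calA^q\) non-trivially, but one must carefully handle conditions \(q\) that lie off the chain \(\Set{p_\alpha}\) as well as those extending it, leaning on the incompatibility observation to rule out deciders in each case. If this diagonalisation can be arranged so that in the end \(\calA \cup \Set{Y}\) is \(\theta\)-independent, the maximality of \(\calA\) is violated and the assumption must fail, yielding the desired \(p_*\) with \(\calA/p_*\) globally maximal.
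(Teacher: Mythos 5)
Your setup is sound: negating the conclusion does give, for every \(p\), some \(q\supseteq p\) and a set \(X\subseteq\calA^q\) that no \(r\supseteq q\) decides, and the goal is indeed to assemble from such witnesses a single \(Y\subseteq\lambda\) with \(\calA\cup\Set{Y}\) still \(\theta\)-independent. But the construction you propose has a genuine gap, one your own final paragraph flags without closing. A single \(\subseteq\)-increasing chain \(p_0\subseteq p_1\subseteq\cdots\) of length \(\theta\) cannot diagonalise against an enumeration of \(P\): there is no reason for \(\abs{P}\) to be \(\leq\theta\), and, more fundamentally, a chain says nothing about the many conditions incompatible with it, which is exactly where your ``handle conditions off the chain'' step would have to do all the work. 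Your incompatibility observation is correct but aimed at the wrong object; its real use is that \(q\perp q'\) forces \(\calA^q\cap\calA^{q'}=\emptyset\) (they disagree about some \(A\in\calA\)), which is what tames a union of witnesses spread across an \emph{antichain}.

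That is the argument that works (the paper itself only cites this Fact to Kunen's article, so there is no in-text proof to compare against). Under the contradiction hypothesis, the set of \(q\in P\) carrying an undecidable \(X_q\subseteq\calA^q\) is dense (note that if \(X\) is undecidable below \(q\), then \(X\cap\calA^{q'}\) is undecidable below any \(q'\supseteq q\)). Choose a maximal antichain \(\Set{q_i\mid i\in I}\) of such conditions with witnesses \(X_i\subseteq\calA^{q_i}\), and set \(Y=\bigcup_{i\in I}X_i\). Given any \(q\in P\), pick \(i\) with \(q\comp q_i\) and let \(r=q\cup q_i\in P\). Since \(\calA^{q_i}\cap\calA^{q_j}=\emptyset\) for \(i\neq j\), we have \(\calA^r\cap Y=\calA^r\cap X_i\), and applying the undecidability of \(X_i\) to \(r\supseteq q_i\) gives \(\calA^r\cap X_i\neq\emptyset\) and \(\calA^r\setminus X_i\neq\emptyset\). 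Hence every \(\calA^q\) meets both \(Y\) and \(\lambda\setminus Y\), so \(\calA\cup\Set{Y}\) is \(\theta\)-independent, and \(Y\notin\calA\) since each \(A\in\calA\) is decided by \(\Set{\tup{A,1}}\); this contradicts the maximality of \(\calA\). No recursion, and no enumeration of \(P\), is needed.
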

Hence, replacing \(\lambda\) by some \(\lambda'<\lambda\) if necessary, we may assume that \(\calA\) is globally maximal \(\theta\)-independent on \(\lambda\). Let
\begin{equation*}
\calI_{\calA}=\Set*{X\subseteq\lambda\mid(\forall p\in P)\calA^p\nsubseteq X}.
\end{equation*}
Then \(\calI_{\calA}\) is \(\theta^+\)-saturated and \((2^\alpha)^+\)-complete for all \(\alpha<\theta\). Setting \(\kappa\) be least such that \(\calI_{\calA}\) is not \(\kappa^+\)-complete, we can refine \(\calI_{\calA}\) to a \(\kappa\)-complete \(\theta^+\)-saturated ideal on \(\kappa\). We immediately have \(\sup\Set{(2^\alpha)^+\mid\alpha<\theta}\leq\kappa\leq\lambda\). On the other hand, if \(\calA_0\in[\calA]^\theta\) then \(P_0=\Set{\calA^p\mid p\text{ a total function }\calA_0\to2}\subseteq\calI_{\calA}\), but \(\bigcup P_0=\lambda\notin\calI_\calA\) and so \(\kappa\leq2^\theta\).
\end{proof}

\begin{rk}
The maximal \(\theta\)-independent families constructed by \cref{lem:ideal-gives-family} are globally maximal \(\theta\)-independent. Furthermore, when we later construct maximal \(\kappa\)-independent families on \(\kappa\), the bounds directly give us that there is a \(\kappa\)-complete and \(\kappa^+\)-saturated ideal on \(\kappa\).
\end{rk}

\begin{cor}
If there is a maximal \(\theta\)-independent family for some uncountable regular \(\theta\) then there is an inner model containing a measurable cardinal.
\end{cor}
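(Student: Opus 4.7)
The plan is to reduce the corollary to \cref{thm:family-gives-ideal} together with a standard result about precipitous ideals. If \(\calA \subseteq \power(\lambda)\) is a maximal \(\theta\)-independent family for some uncountable regular \(\theta\), then \cref{thm:family-gives-ideal} yields a cardinal \(\kappa\) with \(\sup\Set{(2^\alpha)^+ \mid \alpha < \theta} \leq \kappa \leq \min\Set{\lambda, 2^\theta}\) and a non-trivial \(\theta^+\)-saturated \(\kappa\)-complete ideal \(\calI\) on \(\kappa\). First I would check that \(\kappa\) is uncountable: since \(\theta\) is uncountable regular, \(\sup\Set{(2^\alpha)^+\mid\alpha<\theta}\geq\theta^+\), so \(\kappa \geq \theta^+ \geq \aleph_2\). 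In particular \(\theta^+ \leq \kappa^+\), so a \(\theta^+\)-saturated ideal is automatically \(\kappa^+\)-saturated.

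The second step is to invoke the classical fact that any \(\kappa^+\)-saturated \(\kappa\)-complete non-trivial ideal on an uncountable cardinal \(\kappa\) is precipitous. Then the further classical result of Jech--Magidor--Mitchell--Prikry says that the existence of a precipitous ideal suffices to build an inner model with a measurable cardinal. Concretely, forcing with \(\power(\kappa)/\calI\) adds a \(V\)-generic ultrafilter \(\calU\) extending \(\calI^\ast\); precipitousness of \(\calI\) ensures that \(\Ult(V,\calU)\) is well-founded, giving, in \(V[G]\), an elementary embedding \(j\colon V\to M\) with \(\crit(j)=\kappa\) (or some \(\mu \leq \kappa\) where the ideal first fails \(\mu^+\)-completeness). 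A core-model-theoretic argument then produces a measurable cardinal in an inner model of \(V\), since the existence of such a \(j\) is incompatible with \(V\) sitting inside a rigid core model below a measurable.

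The main and only obstacle is really bookkeeping: checking that the hypotheses of the Jech--Magidor--Mitchell--Prikry theorem are satisfied by the ideal produced in \cref{thm:family-gives-ideal}. This is routine given the bounds on \(\kappa\) relative to \(\theta\). Since this is a direct corollary, I would keep the proof to two or three lines that quote \cref{thm:family-gives-ideal}, note the saturation upgrade \(\theta^+ \leq \kappa^+\), and cite the precipitous-ideal-to-inner-model implication.
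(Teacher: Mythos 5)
Your argument is correct, but it takes a different route from the paper at the final step. Both proofs start identically: apply \cref{thm:family-gives-ideal} to obtain a non-trivial \(\theta^+\)-saturated \(\kappa\)-complete ideal on some \(\kappa\geq\theta\), and observe that \(\theta^+\leq\kappa^+\) upgrades the saturation to \(\kappa^+\)-saturation. Where the paper then simply cites Kunen's older result \cite[Theorem~11.13]{kunen_some_1970}, which says directly that a \(\kappa\)-complete, \(\kappa^+\)-saturated ideal on \(\kappa\) yields an inner model in which \(\kappa\) is measurable, you instead pass through precipitousness: a \(\kappa^+\)-saturated \(\kappa\)-complete ideal is precipitous, and a precipitous ideal gives an inner model with a measurable by the Jech--Magidor--Mitchell--Prikry/Mitchell core-model analysis. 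Both chains of citations are sound; the critical point of the generic ultrapower is exactly \(\kappa\) here since the ideal is \(\kappa\)-complete and non-trivial, so your parenthetical hedge about a smaller \(\mu\) is unnecessary. Your route invokes heavier and historically later machinery (the core model \(K\) below a measurable) where Kunen's iterated-ultrapower argument already suffices, but it has the advantage of generalising to weaker hypotheses --- precipitousness alone, without any saturation, would do --- and it is the argument one would reach for when, as in \cref{prop:ideals-give-measurables}, one wants to handle several saturated ideals simultaneously. Either way the corollary follows; the paper's version is just the two-line citation you anticipated, with Kunen's theorem in place of the precipitous-ideal detour.
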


\begin{proof}
By \cite[Theorem~11.13]{kunen_some_1970}, if \(\kappa\) carries a \(\kappa\)-complete, \(\kappa^+\)-saturated ideal then there is an inner model in which \(\kappa\) is measurable. Since \(\kappa\geq\theta\), \(\theta^+\)-saturated implies \(\kappa^+\)-saturated.
\end{proof}

In fact, \cite[Theorem~11.13]{kunen_some_1970} can be extended to any finite collection of saturated ideals on increasing cardinals, as noted in \cite{schlutzenberg_mo_2022}.

\begin{lem}[{\cite{schlutzenberg_mo_2022}}]\label{prop:ideals-give-measurables}
If \(\kappa_0<\cdots<\kappa_{n-1}\) are uncountable regular cardinals such that for all \(i < n\) there is a normal \(\kappa_i\)-complete, \(\kappa_i^+\)-saturated ideal \(\calI_i \subseteq \power(\kappa_i)\), then in \(L[\calI_0,\dots,\calI_{n-1}]\), \(\kappa_i\) is measurable for all \(i\).
\end{lem}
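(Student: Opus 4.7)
The strategy is to apply the \(n=1\) case (Kunen \cite[Theorem~11.13]{kunen_some_1970}) inside the single inner model \(W = L[\calI_0, \dots, \calI_{n-1}]\), using each ideal \(\calI_i\) to witness measurability of \(\kappa_i\). This requires slightly more than a naive application, since Kunen's theorem as stated produces the measure in \(L[\calI_i]\), which may be a proper inner model of \(W\); one needs the same argument to yield a measure living in \(W\) itself.

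First I would verify that each \(\calI_i\) remains a normal, \(\kappa_i\)-complete, \(\kappa_i^+\)-saturated ideal when viewed inside \(W\). Completeness and normality are first-order and absolute; saturation is downward absolute, since any \(\kappa_i^+\)-sized antichain in \(\power(\kappa_i)/\calI_i\) present in \(W\) would also witness non-saturation in \(V\), contradicting our hypothesis. Working inside \(W\), forcing with \(\power(\kappa_i)/\calI_i\) then produces a generic ultrafilter whose ultrapower \(j\colon W \to M\) is well-founded (by \(\kappa_i^+\)-saturation) with critical point \(\kappa_i\), and the derived ultrafilter \(\calU_i = \Set{X \in \power(\kappa_i)^W \mid \kappa_i \in j(X)}\) is a normal measure on \(\kappa_i\) in the generic extension of \(W\).

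The main obstacle---which is already the crux of Kunen's \(n=1\) case---is to show that \(\calU_i\) lies in \(W\) rather than merely in the generic extension. This is handled in \cite{kunen_some_1970} by an iterability/absoluteness argument that exhibits \(\calU_i\) as definable from \(\calI_i\) via a formula which is absolute between \(W\) and its generic extension. The argument goes through uniformly inside \(W\) because the other ideals \(\calI_j\) for \(j \neq i\) live on cardinals distinct from \(\kappa_i\) and do not interfere with the internal combinatorics of the ultrapower construction at \(\kappa_i\); in particular the saturation of \(\calI_i\) in \(W\) carries all the weight, and no additional hypotheses on \(W\) beyond those that \(L[\vec\calI]\) automatically satisfies (such as \(\GCH\) locally) are required. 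Performing this step at each \(i\) places a normal measure on each \(\kappa_i\) inside \(W\), as required.
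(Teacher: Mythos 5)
The paper does not actually prove this lemma: it is imported verbatim from \cite{schlutzenberg_mo_2022} and used as a black box, so there is no internal proof to compare your attempt against. Judged on its own terms, your proposal correctly identifies the crux --- getting the derived ultrafilter \(\calU_i\) into \(W=L[\calI_0,\dots,\calI_{n-1}]\) rather than merely into a generic extension --- but it does not actually address it, and the way you defer it to \cite{kunen_some_1970} mischaracterises what that argument provides. For normal \(\calI_i\) the derived object is \(\calU_i=\Set{X\in\power(\kappa_i)^W\mid [X]_{\calI_i}\in G}\); it is essentially the generic filter itself, and there is no ``formula absolute between \(W\) and its generic extension'' in Kunen's proof that places it back in the ground model. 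What Kunen's Theorem~11.13 actually yields is that \(L[\calD]\), computed \emph{in the generic extension} from the full derived ultrafilter \(\calD\), satisfies ``\(\kappa\) is measurable''; pulling a measure back into an inner model of \(V\) requires the iterated-ultrapower analysis of \(L[\mu]\)-models (iterability of \(\calD\), the uniqueness theorem, identification of a sufficiently long iterate with an \(L[\text{club filter}]\)-type model that already lives in \(V\)). None of that appears in your sketch.

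Even granting the \(n=1\) machinery, the lemma's content is that the measures land in the \emph{specific} model \(L[\calI_0,\dots,\calI_{n-1}]\), which is in general strictly larger than any \(L[\calU_i]\) and to which the condensation and uniqueness theory of \(L[\mu]\) does not apply off the shelf: the predicates \(\calI_j\) are arbitrary sets of subsets of the \(\kappa_j\) and can, for example, destroy local \(\GCH\) in \(W\), contrary to your parenthetical remark. The sentence ``the other ideals do not interfere'' is precisely the assertion that needs proof here, not a consequence of anything you have written. A secondary gap: downward absoluteness of \(\kappa_i^+\)-saturation to \(W\) is only immediate if \((\kappa_i^+)^W=(\kappa_i^+)^V\); if \(W\) collapses \(\kappa_i^+\), a \(W\)-antichain of \(W\)-cardinality \((\kappa_i^+)^W\) has \(V\)-cardinality \(\kappa_i\) and so does not contradict saturation in \(V\). (That particular issue can be sidestepped by restricting the \(V\)-generic ultrapower embedding to \(W\) instead of forcing over \(W\) directly, which is also the cleaner route for the rest of the argument.)
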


It follows from \cref{thm:family-gives-ideal} and \cref{prop:ideals-give-measurables} that if there is a maximal \(\theta\)-independent family on \(\lambda\) and a maximal \(\theta'\)-independent family on \(\lambda'\) such that \({\min\Set{\lambda,2^\theta}<\sup\Set{(2^\alpha)^+\mid\alpha<\theta'}}\), then there is an inner model with two measurable cardinals, and indeed this pattern holds for all finite collections of such families. Therefore, a corollary of \cref{thm:main-sc} is that the consistency of an \(\aleph_1\)-strongly compact cardinal implies the consistency of any finite number of measurable cardinals (though this is already known).

Kunen briefly sketches how to obtain a maximal \(\kappa\)-independent family on inaccessible \(\kappa\), starting with a model in which \(\kappa\) is measurable. This requires a slightly more delicate use of \cref{thm:duality-prime} to obtain the result. We have also included additional content regarding lifting normal measures, which will be useful when proving \cref{thm:measurable-class-families}.

\begin{prop}\label{prop:iteration-length-kappa}
Let \(\kappa\) be measurable with normal measure \(\calU\), \(2^\kappa=\kappa^+\), and \(A\in\calU\) be a set of regular cardinals. Let \(G\) be \(V\)-generic for the Easton-support iteration \({\bbP=\bigast_{\alpha\in A}\Add(\alpha,\alpha^+)}\). Then in \(V[G]\) there is a maximal \(\kappa\)-independent family \(\calA\subseteq\power(\kappa)\). Furthermore, if \(\calV\in V\) is a normal measure on \(\kappa\) such that \(A\notin\calV\) then there is a normal measure \(\hat\calV\supseteq\calV\) on \(\kappa\) in \(V[G]\).
\end{prop}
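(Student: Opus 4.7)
The plan is to combine our two hammers: use the ultrapower $j=j_\calU\colon V\to M$ to factor $j(\bbP)$ in a way that fits \cref{thm:duality-prime}, extract the requisite ideal, and then apply \cref{lem:ideal-gives-family}. The essential cardinal arithmetic throughout is that $M$ is $\kappa$-closed in $V$, that $2^\kappa=\kappa^+$ is preserved in $V[G]$ (since $\abs{\bbP}\leq\kappa$), and that $j(\kappa)<(2^\kappa)^+=\kappa^{++}$, so $\abs{j(\kappa)}^V=\kappa^+$.

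Since $\calU$ is normal and $A\in\calU$ we have $\kappa\in j(A)$, and since $\crit(j)=\kappa$ we also have $j(A)\cap\kappa=A$. The stages of $j(\bbP)=\bigast_{\alpha\in j(A)}\Add(\alpha,\alpha^+)^M$ strictly below $\kappa$ therefore reproduce $\bbP$, and $\kappa\in j(A)$ contributes a stage $\Add(\kappa,\kappa^+)^M=\Add(\kappa,\kappa^+)^V$ by $\kappa$-closure of $M$ and $2^\kappa=\kappa^+$. Writing $\ddbbR$ for the (nontrivial) tail in $M$ at stages in $j(A)\setminus(\kappa+1)$ and setting $\ddbbQ=\Add(\kappa,\kappa^+)$, we obtain $j(\bbP)\cong\bbP\iter\ddbbQ\iter\ddbbR$; the isomorphism $\pi$ satisfies $\pi(j(p))=\tup{p,\1,\1}$ because every $p\in\bbP$ has size less than $\kappa$ and is hence fixed by $j$.

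To invoke \cref{thm:duality-prime} I must produce, in $V[G\iter H]$, an $M[G\iter H]$-generic filter $F\subseteq\ddbbR^{G\iter H}$. The tail $\ddbbR$ is $\mu_0$-closed in $M$, where $\mu_0=\min(j(A)\setminus(\kappa+1))\geq\kappa^+$, and the number of dense subsets of $\ddbbR$ in $M[G\iter H]$, as seen from $V[G\iter H]$, is bounded by $\abs{j(\bbP)}^V\leq\kappa^+$. Using the $\kappa$-closure of $M$ in $V$ to keep the partial constructions inside $M[G\iter H]$, I build $F$ by transfinite recursion of length $\kappa^+$; this is the principal technical step. \cref{thm:duality-prime}, together with the $\kappa$-completeness of $\calU$, then produces a $\bbP$-name for a $\kappa$-complete ideal $\dot\calI$ on $\kappa$ with $\power(\kappa)/\calI\cong B(\Add(\kappa,\kappa^+))$ in $V[G]$. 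Since $2^\kappa=\kappa^+$ in $V[G]$, \cref{lem:ideal-gives-family} delivers a maximal $\kappa$-independent family $\calA\subseteq\power(\kappa)$ in $V[G]$.

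For the lifting of $\calV$, let $j_\calV\colon V\to N$ be its ultrapower. Since $A\notin\calV$ we have $\kappa\notin j_\calV(A)$, so $j_\calV(\bbP)$ factors without a stage at $\kappa$ as $\bbP\iter\ddbbR'$, with $\ddbbR'$ the tail in $N$ above $\kappa$. An entirely analogous construction produces an $N[G]$-generic $F'\subseteq(\ddbbR')^G$ in $V[G]$, and the standard lifting criterion $j_\calV``G\subseteq G\iter F'$ is immediate since $j_\calV$ fixes each $p\in\bbP$. The resulting lift $\hj_\calV\colon V[G]\to N[G\iter F']$ then yields the normal measure $\hat\calV\defeq\Set{X\in\power(\kappa)^{V[G]}\mid\kappa\in\hj_\calV(X)}\supseteq\calV$ on $\kappa$ in $V[G]$. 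The single main obstacle is the construction of the tail-generic filters $F$ and $F'$: the enumeration of dense sets has length $\kappa^+$ while $M[G\iter H]$ (respectively $N[G]$) is typically only $\kappa$-closed in the extension, so one must arrange the recursion so that limit-stage lower bounds are available inside the tail iteration.
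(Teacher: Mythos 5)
Your proposal takes essentially the same route as the paper: factor \(j(\bbP)\) through the Easton support as \(\bbP\iter\Add(\kappa,\kappa^+)\iter\ddbbR\), construct an \(M[G\iter H]\)-generic filter for the tail inside \(V[G\iter H]\) by a recursion of length \(\kappa^+\), apply \cref{thm:duality-prime} and then \cref{lem:ideal-gives-family}, and lift \(j_\calV\) using the absence of a stage at \(\kappa\). Two justifications need tightening. First, the number of dense subsets of the tail lying in \(M[G\iter H]\) is not bounded by \(\abs{j(\bbP)}\); a poset of size \(j(\kappa)\) has \(2^{j(\kappa)}\)-many subsets there. The paper counts maximal antichains instead: under \(\GCH\) the iteration \(\bbP\) has only \(\kappa\)-many maximal antichains, so by elementarity \(M[G\iter H]\) sees only \(j(\kappa)\)-many for the tail, and \(\abs{j(\kappa)}^V=\kappa^+\). (Your bound of \(\kappa^+\) happens to be correct, since \((2^{j(\kappa)})^M=(j(\kappa)^+)^M\leq j(\kappa^+)<\kappa^{++}\), but not for the reason you give.) The recursion then works exactly as you describe, provided you use that \(M[G\iter H]\) is closed under \(\kappa\)-sequences of \(V[G\iter H]\) (that is, \(\kappa^+\)-closed in the paper's convention), not merely under \({<}\kappa\)-sequences. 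Second, the \(\kappa\)-completeness of the ideal does not follow from \cref{thm:duality-prime} as stated: its completeness clause is asserted only in the special case that \(\ddbbR\) is trivial, which fails here. The paper instead exhibits the ideal explicitly as \(\Set{\ddA^G\subseteq\kappa\mid\1\forces\check{\kappa}\notin j(\ddA)}\) and verifies \(\kappa\)-completeness directly from \(\crit(j)=\kappa\); a line to this effect is needed. With those two repairs your argument, including the lifting of \(\calV\), matches the paper's.
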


\begin{proof}
Let \(j=j_\calU\colon V\to M\), and \({H\subseteq\Add(\kappa,\kappa^+)}\) be \(V[G]\)-generic. Note that \(j(\kappa)<(2^\kappa)^+=\kappa^{++}\). Furthermore, \(M\) is \(\kappa^+\)-closed, and this is preserved by the forcing (\(\bbP\iter\Add(\kappa,\kappa^+)\in M\)), so \(M[G\iter H]\) is also \(\kappa^+\)-closed.

Let \(\ddbbR=j(\bbP)/(\bbP\iter\Add(\kappa,\kappa^+)^\bullet)\), noting that due to the Easton support we truly have \(j(\bbP)\cong\bbP\iter\Add(\kappa,\kappa^+)\iter\ddbbR\) as required in \cref{thm:duality-prime}. Let
\begin{equation*}
\bbR=\ddbbR^{G\iter H}=\bigast_{\alpha\in j(A)\setminus\kappa^+}\Add(\alpha,(\alpha^+)^M)^M.
\end{equation*}
\(\abs{\bbP}^V=\kappa\), so \(\abs{j(\bbP)}^M=j(\kappa)\), and hence \(\abs{\bbR}^{V[G]}=\kappa^+\). Furthermore, each iterand of \(\bbR\) is \(\alpha\)-closed according to \(M\) for some \(\alpha\geq\kappa^+\). Since \(M[G\iter H]\) is \(\kappa^+\)-closed, this means that each iterand of \(\bbR\) is \(\kappa^+\)-closed (in \(V[G\iter H]\)) and, since it is an iteration of length \(j(\kappa)\geq\kappa^+\), \(\bbR\) itself is \(\kappa^+\)-closed. However, \(\bbP\) has only \(\kappa\)-many maximal antichains: Each iterand is of cardinality less than \(\kappa\) and so has fewer than \(\kappa\)-many antichains. Hence \(M[G\iter H]\vDash``\bbR\) has only \(j(\kappa)\)-many maximal antichains''. Since \(j(\kappa)<\kappa^{++}\) we can build an \(M[G\iter H]\)-generic filter \(F\subseteq\bbR\) in \(V[G\iter H]\). Hence, by \cref{thm:duality-prime}, in \(V[G]\) there is an ideal \(\calI\) on \(\kappa\) such that \(\power(\kappa)/\calI\cong B(\Add(\kappa,\kappa^+))^{V[G]}\). This ideal can be expressed as
\begin{equation*}
\calI\defeq\Set*{\ddA^G\subseteq\kappa\mid\1\forces_{\bbP\iter\Add(\kappa,\kappa^+)\iter\ddbbR/\ddF}\check{\kappa}\notin j(\ddA)},
\end{equation*}
where \(\ddF\) is a \(\bbP\iter\Add(\kappa,\kappa^+)\)-name for an \(M[G\iter H]\)-generic ideal \(F\subseteq\bbR\). In this case, if \({\Set{\ddA_\alpha\mid\alpha<\gamma}\subseteq\calI}\) for some \(\gamma<\kappa\) then, since \(\crit(j)=\kappa\), \(j(\bigcup\ddA_\alpha)=\bigcup j(\ddA_\alpha)\), and so \(\bigcup\ddA_\alpha^G\in\calI\). Hence, \(\calI\) is \(\kappa\)-complete as required, and so by \cref{lem:ideal-gives-family} there is a maximal \(\kappa\)-independent family on \(\kappa\) in \(V[G]\).

On the other hand, let \(\calV \in V\) be a normal measure on \(\kappa\) such that \(A\notin\calV\), with ultrapower embedding \(i\colon V\to N\). Then \(i(\bbP)\cong\bbP\iter\ddbbR\) (without the \(\Add(\kappa,\kappa^+)\) iterand), and so \(F\in V[G]\). Hence we may lift \(i\) to \(\hi\colon V[G]\to M[G\iter F]\) in \(V[G]\) and obtain normal measure \(\hat\calV=\Set{B\subseteq\kappa\mid\kappa\in\hi(B)}\) on \(\kappa\) extending \(\calV\).
\end{proof}

\subsection{A \(\theta^+\)-strongly compact cardinal}\label{s:applications;ss:strongly-compact}

These techniques are ripe for transfer to other large cardinal properties. In the following we shall find that \(\kappa\) being \(\theta^+\)-strongly compact\footnote{Note that, for fixed \(\theta\), there is \(\mu>\theta\) such that \(\kappa\) is \(\mu\)-strongly compact if and only if \(\kappa\) is \(\theta^+\)-strongly compact.} for uncountable regular \(\theta\) is sufficient to produce the ultrapower embeddings \(j\colon V\to M\) that give rise to a proper class of \(\lambda\) such that there is a maximal \(\theta\)-independent families \(\calA\subseteq\power(\lambda)\). The transfer is not entirely clean, as we additionally require that \(2^{{<}\kappa}=\kappa\), but as noted in \cref{cor:immediate-sc} this does not increase the consistency strength of the assumption.

\begin{thmalph}\label{thm:main-sc}
Let \(\kappa\) be \(\theta^+\)-strongly compact for some uncountable regular \(\theta<\kappa\), with \(2^{{<}\kappa}=\kappa\), and let \(G\) be \(V\)-generic for \(\Add(\theta,\kappa)\). In \(V[G]\), for all \(\lambda\geq\kappa\) with \(\cf(\lambda)\geq\kappa\), there is a maximal \(\theta\)-independent family \(\calA\subseteq\power(\lambda)\).
\end{thmalph}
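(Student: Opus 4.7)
The plan is to mirror the strategy of \cref{thm:kunens-theorem-applicable} and \cref{prop:iteration-length-kappa}: produce an ultrapower embedding $j\colon V\to M$ from the $\theta^+$-strong compactness of $\kappa$, factor $j(\bbP)$ where $\bbP\defeq\Add(\theta,\kappa)$ as $\bbP\iter\Add(\theta,2^\lambda)^\bullet\iter\ddbbR$, invoke \cref{thm:duality-prime} to obtain a \(\theta\)-complete ideal on a $\lambda$-sized set whose quotient is isomorphic to $B(\Add(\theta,2^\lambda))$, and finally apply \cref{lem:ideal-gives-family}. Fix $\lambda\geq\kappa$ with $\cf(\lambda)\geq\kappa$ for the rest of the argument. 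Routine cardinal arithmetic using $2^{{<}\kappa}=\kappa$ and $\cf(\lambda)\geq\kappa$ gives $\abs{\power_\kappa(\lambda)}^V=\lambda^{{<}\kappa}=\lambda$, and the $\kappa^+$-cc of $\bbP$ gives $(2^\lambda)^{V[G]}=(2^\lambda)^V$.

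First I would apply \cref{thm:sc-characterisation}\ref{cond:fine-ultra} to obtain a fine $\theta^+$-complete ultrafilter $\calU$ on $X\defeq\power_\kappa(\lambda)$ with associated ultrapower embedding $j=j_\calU\colon V\to M$, so that $\crit(j)\geq\theta^+$, $j``\lambda\subseteq D\defeq[\id]_\calU$, and $M\vDash\abs{D}<j(\kappa)$. Since $\calU$ is $\theta^+$-complete, $M$ is $\theta^+$-closed, and this suffices to give $\Add(\theta,\beta)^M=\Add(\theta,\beta)^V$ for every ordinal $\beta$; in particular $j(\bbP)=\Add(\theta,j(\kappa))^V$ and all factorisations may be performed in $V$. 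Using the partition $j(\kappa)=j``\kappa\sqcup(j(\kappa)\setminus j``\kappa)$,
\begin{equation*}
j(\bbP)\cong\Add(\theta,j``\kappa)^V\times\Add(\theta,j(\kappa)\setminus j``\kappa)^V.
\end{equation*}
The bijection $\alpha\mapsto j(\alpha)$ identifies the first factor with $\bbP$, and since $\crit(j)>\theta$ fixes ordinals below $\theta$, this identification sends $j(p)$ to $p$ for every $p\in\bbP$. Splitting off an $\Add(\theta,2^\lambda)^\bullet$ piece from the second factor then yields $j(\bbP)\cong\bbP\iter\Add(\theta,2^\lambda)^\bullet\iter\ddbbR$ with $\pi(j(p))=\tup{p,\1,\1}$.

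To invoke \cref{thm:duality-prime} I would then verify that for any $V$-generic $G\iter H\subseteq\bbP\iter\Add(\theta,2^\lambda)$ an $M[G\iter H]$-generic filter $F\subseteq\ddbbR^{G\iter H}$ can be built in $V[G\iter H]$. This is a standard diagonalisation paralleling the one in \cref{prop:iteration-length-kappa}: the tail $\ddbbR^{G\iter H}$ inherits $\theta$-closure from its realisation as a factor of $\Add(\theta,j(\kappa))^M$, while $M[G\iter H]$ contributes at most $\abs{j(\bbP)}^M\leq j(\kappa)\leq 2^\lambda$ maximal antichains of $\ddbbR^{G\iter H}$, which $V[G\iter H]$ can enumerate and meet one at a time. \cref{thm:duality-prime} then supplies a $\bbP$-name $\dot{\calI}$ for a $\theta$-complete ideal on $X$ with $\1\forces_\bbP\power(\check X)^\bullet/\dot{\calI}\cong B(\Add(\theta,2^\lambda)^\bullet)$, with $\theta$-completeness coming from the $\theta$-distributivity of $\bbP$ and the $\theta^+$-completeness of $\calU$. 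Transferring $\calI$ along any $V$-bijection $X\to\lambda$ and applying \cref{lem:ideal-gives-family} delivers the desired maximal $\theta$-independent family $\calA\subseteq\power(\lambda)$ in $V[G]$.

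The main obstacle is ensuring that the second factor $\Add(\theta,j(\kappa)\setminus j``\kappa)^V$ is large enough to absorb a copy of $\Add(\theta,2^\lambda)$, equivalently that $\abs{j(\kappa)}^V\geq 2^\lambda$. Since $\abs{j(\kappa)}^V\leq\kappa^\lambda=2^\lambda$ this amounts to forcing equality, and for an arbitrary fine $\theta^+$-complete ultrafilter on $\power_\kappa(\lambda)$ equality need not hold. I would address this by first applying $\theta^+$-strong compactness at parameter $\alpha=2^\lambda$ to obtain an embedding $j'\colon V\to M'$ with $j'(\kappa)>2^\lambda$ (hence $\abs{j'(\kappa)}^V\geq 2^\lambda$), and then deriving $\calU$ from $j'$ via a seed $a\in j'(\power_\kappa(\lambda))$ containing $j'``\lambda$, for instance $a=D'\cap j'(\lambda)$ where $D'\in M'$ witnesses strong compactness for $\alpha=2^\lambda$. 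The derived ultrafilter $\calU_a$ on $X=\power_\kappa(\lambda)$ is automatically fine and $\theta^+$-complete, and verifying that the associated ultrapower retains $\abs{j_{\calU_a}(\kappa)}^V=2^\lambda$ — so that the factorisation of the second step produces the required $\Add(\theta,2^\lambda)^\bullet$ factor — is the delicate point of the proof.
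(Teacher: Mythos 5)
Your outline follows the paper's strategy, but the step you yourself flag as ``the delicate point'' --- showing that the second factor really has size \(2^\lambda\) --- is precisely the heart of the paper's proof, and your proposal leaves it unresolved. The missing argument is this: since \(2^{{<}\kappa}=\kappa\) in \(V\), elementarity gives \(M\vDash(\forall\gamma<j(\kappa))\,2^\gamma\leq j(\kappa)\), so with \(D=[\id]_\calU\) (which satisfies \(j``\lambda\subseteq D\) by fineness and \(M\vDash\abs{D}<j(\kappa)\)) we get \(M\vDash\abs{\power(D)^M}\leq j(\kappa)\); the map \(A\mapsto j(A)\cap D\) is then an injection of \(\power(\lambda)^V\) into \(\power(D)^M\) (injectivity because \(j``\lambda\subseteq D\) lets one recover \(j``A\) from \(j(A)\cap D\)), whence \(2^\lambda\leq\abs{j(\kappa)}\) in \(V\). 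Together with \(j(\kappa)<(\kappa^\lambda)^+=(2^\lambda)^+\) this shows that \emph{every} fine \(\theta^+\)-complete ultrafilter on \(\power_\kappa(\lambda)\) already satisfies \(\abs{j(\kappa)\setminus\kappa}=2^\lambda\); your worry that ``equality need not hold'' is unfounded under the hypothesis \(2^{{<}\kappa}=\kappa\), and the detour through a seed derived from an embedding at \(\alpha=2^\lambda\) is both unnecessary and itself unverified (deriving \(\calU_a\) from \(j'\) does not obviously preserve the lower bound on \(j_{\calU_a}(\kappa)\)).

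A second, related problem is your factorisation \(j(\bbP)\cong\bbP\iter\Add(\theta,2^\lambda)^\bullet\iter\ddbbR\) with a nontrivial remainder \(\ddbbR\). The proposed diagonalisation to build an \(M[G\iter H]\)-generic filter for \(\ddbbR^{G\iter H}\) inside \(V[G\iter H]\) cannot work: \(\ddbbR\) is a factor of \(\Add(\theta,j(\kappa))\) and hence only \(\theta\)-closed, while the number of maximal antichains to be met is on the order of \(2^\lambda\gg\theta\); \(\theta\)-closure only permits meeting fewer than \(\theta\) dense sets one at a time. (Contrast \cref{prop:iteration-length-kappa}, where the remainder is \(\kappa^+\)-closed and there are only \(j(\kappa)<\kappa^{++}\) antichains.) The paper sidesteps this entirely by taking \(\ddbbQ\) to be all of \(\Add(\theta,j(\kappa)\setminus\kappa)\) and \(\ddbbR\) trivial, so that \(\calI=\tup{\calU^\ast}^{V[G]}\) comes directly from the special case of \cref{thm:duality-prime}, and only afterwards identifying \(\Add(\theta,j(\kappa)\setminus\kappa)\) with \(\Add(\theta,2^X)^{V[G]}\) via the cardinality computation above (together with \(\abs{(2^\lambda)^{V[G]}}=\abs{(2^\lambda)^V}\) from the \(\kappa^+\)-c.c., which you do note). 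Once you supply the lower bound \(2^\lambda\leq\abs{j(\kappa)}\), you should likewise take \(\ddbbR\) trivial and the genericity obstacle disappears.
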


\begin{proof}
Let \(\lambda\) be such that \(\cf(\lambda)\geq\kappa\). We wish to use \cref{lem:ideal-gives-family} to show that there is a maximal \(\theta\)-independent family \(\calA\subseteq\power(X)\), where \(X=\power_\kappa(\lambda)^V\) (noting that \(\abs{X}=\lambda\)). We therefore require a \(\theta\)-complete ideal \(\calI\) over \(X\) such that \(B(\Add(\theta,2^X))\) is isomorphic to \(\power(X)/\calI\) in \(V[G]\), which we shall obtain through \cref{thm:duality-prime}.

Let \(\calU\in V\) be a fine \(\theta\)\nobreakdash-complete ultrafilter on \(X\) and \(j=j_\calU\colon V\to M\). Since \(\kappa\geq\crit(j)>\theta\),
\begin{align*}
j(\Add(\theta,\kappa))&=\Add(\theta,j(\kappa))\\
&\cong\Add(\theta,j``\kappa)\times\Add(\theta,j(\kappa)\setminus j``\kappa)\\
&\cong\Add(\theta,\kappa)\times\Add(\theta,j(\kappa)\setminus\kappa)\times\Set{\1}.
\end{align*}
Furthermore, each \(p\in\Add(\theta,\kappa)\) is of cardinality less than \(\theta\) and thus \(j(p)=j``p\), so the isomorphism extends \(j(p)\mapsto\tup{p,\1,\1}\) as required. Hence, setting \(\calI=\tup{\calU^\ast}^{V[G]}\), we have \(B(\Add(\theta,j(\kappa)\setminus\kappa)^V)\cong\power(X)/\calI\) in \(V[G]\) by \cref{thm:duality-prime}. To finish we therefore need only show that
\begin{equation*}
\Add(\theta,j(\kappa)\setminus\kappa)^V\cong\Add(\theta,2^X)^{V[G]}.
\end{equation*}
\(\Add(\theta,\kappa)\) is \(\theta\)-closed so, for all \(Y\in V\), \(\Add(\theta,Y)^V=\Add(\theta,Y)^{V[G]}\) and so it is sufficient to prove that \(\abs{j(\kappa)\setminus\kappa}=\abs{(2^\lambda)^{V[G]}}\).

\(\Add(\theta,\kappa)\) is \((\theta^{{<}\theta})^+\)-c.c.\ and \(\theta^{{<}\theta}\leq\theta^{{<}\crit(j)}=\crit(j)\leq\kappa\), so \(\Add(\theta,\kappa)\) is \(\kappa^+\)-c.c. By standard techniques,\footnote{One could adapt the proof of \cite[Lemma~15.1]{jech_set_2003} to incorporate chain conditions, for example.} \({(2^\lambda)^{V[G]}\leq(\abs{\Add(\theta,\kappa)}^{\kappa\times\lambda})^V}\). \({\abs{\Add(\theta,\kappa)}\leq\kappa^\theta\leq\lambda^\lambda}\), so we get that \(\abs{(2^\lambda)^{V[G]}}\leq\abs{(2^\lambda)^V}\). Certainly \((2^\lambda)^V\subseteq(2^\lambda)^{V[G]}\) so we conclude that \(\abs{(2^\lambda)^V}=\abs{(2^\lambda)^{V[G]}}\). It is therefore sufficient to show that \(\abs{2^\lambda}=\abs{j(\kappa)\setminus\kappa}\) in \(V\). To that end, we work in \(V\) for the remainder of the proof.

Since \(2^\lambda>\kappa\) it is sufficient to show that \(2^\lambda\leq j(\kappa)<(2^\lambda)^+\). Let \(D=[\id]_\calU\) in \(M\). By the fineness of \(\calU\), \(j``\lambda\subseteq D\) and \(M\vDash\abs{D}<j(\kappa)\). By elementarity,
\begin{equation*}
M\vDash(\forall\gamma<j(\kappa))2^\gamma\leq j(\kappa)\quad\text{and hence}\quad{}M\vDash\abs{\power(D)^M}\leq j(\kappa).
\end{equation*}
\(2^\lambda\leq\abs{\power(D)^M}\) as follows: Consider the function \(f\colon\power(\lambda)\to\power(D)^M\) given by \(f(A)=j(A)\cap D\). Since \(j``\lambda\subseteq D\) we have that if \(f(A)=f(B)\) then \(j``A=j``B\) and so \(A=B\). Hence \(f\) is an injection and \(2^\lambda\leq\abs{j(\kappa)}\).\footnote{This method is similar to \cite[Lemma~3.3.2]{jech_ideals_1979}, but could be older. We are grateful for Goldberg's help in \cite{goldberg_mo_control_2023} for this result.}

On the other hand, \(j(\kappa)=\Set{[f]_\calU\mid f\colon X\to\kappa}\) and so \(j(\kappa)<(\kappa^\lambda)^+=(2^\lambda)^+\). Thus \(2^\lambda\leq j(\kappa)<(2^\lambda)^+\) as required.
\end{proof}

\subsection{A class of measurable cardinals}\label{s:applications;ss:class-of-measurables}

Assume \(\GCH\) and suppose that \(\kappa<\lambda\) are the two smallest measurable cardinals. By \cite{levy_measurable_1967}, if \(G\) is \(V\)-generic for some \(\bbP\), where \(\abs{\bbP}<\lambda\), then \(\lambda\) is still measurable in \(V[G]\). Hence, as in \cref{prop:iteration-length-kappa}, if we force with \(\bigast_{\alpha\in A}\Add(\alpha,\alpha^+)\), where \(A=\Set{\alpha<\kappa\mid\alpha\text{ is regular}}\), then there will be a maximal \(\kappa\)-independent family \(\calA\subseteq\power(\kappa)\) in the forcing extension. Furthermore, this forcing has cardinality \(\kappa\) and so \(\lambda\) will still be measurable, and \(\GCH\) will still hold. If we were to repeat this, say letting \(\bbP'=\bigast_{\alpha\in A'}\Add(\alpha,\alpha^+)\) in the forcing extension, where \(A'=\Set{\alpha<\lambda\mid\kappa<\alpha\land\alpha\text{ is regular}}\), then again \cref{prop:iteration-length-kappa} shows that in a new forcing extension by \(\bbP'\) there is a maximal \(\lambda\)-independent family \(\calA'\subseteq\power(\lambda)\). However, since \(\bbP'\) is \(\kappa^+\)-closed, no new subsets of \(\kappa\) nor sequences of length \(\kappa\) in \(\calA\) have been added, so \(\calA\) is still maximal \(\kappa\)-independent in the second forcing extension. One may reasonably expect that we can continue iterating this procedure to produce a (potentially class-size) forcing extension \(V[G]\) such that, whenever \(\kappa\) is measurable in \(V\), there is a maximal \(\kappa\)-independent family on \(\kappa\).

The na\"ive approach to this argument has us construct the Easton-support iteration \(\bigast_{\alpha\in A}\Add(\alpha,\alpha^+)\), where \(A\) is the class of all regular non-measurable cardinals. We would then hope to use \cref{prop:iteration-length-kappa} to show that if \(G \subseteq \bbP\) is \(V\)-generic and \(\calU\) is a normal measure on \(\kappa\) then we can construct a maximal \(\kappa\)-independent family on \(\kappa\) in \(V[G]\). While this may work, one must be careful of the condition that \(A\in\calU\) found in \cref{prop:iteration-length-kappa}. If \(\calU\) was such that \({\Set{\alpha<\kappa\mid\alpha\text{ is measurable}}\in\calU}\), then \(\kappa\notin j_\calU(A\cap\kappa)\) and so \(j_\calU(\bbP_\kappa)\) is not isomorphic to \(\bbP_\kappa\iter\Add(\kappa,\kappa^+)\iter\ddbbR\) as desired. Instead \(j_\calU \colon V \to M\) may be lifted to \(\hj\colon V[G \res \kappa]\to M[G\res\kappa\iter F]\) in \(V[G\res\kappa]\) (and then can be lifted to \(\tj\colon V[G] \to M[j(G)]\) in \(V[G]\) by the closure of \(\bbP/\bbP_\kappa\)). Therefore we must be sure to use \(\calU\) with \(A\cap\kappa\in\calU\) in our argument, which is to say \(o(\calU)=0\). Fortunately, such such measures always exist by the well-foundedness of \(\mitchless\).

Continuing along our lifting argument, if \(\calU \in V\) is a normal measure on \(\kappa\) and \(o(\calU)^V > 0\) then there is a normal measure \(\hat\calU \supseteq \calU\) in \(V[G]\). This allows us to show that if \(o(\kappa)^V > \alpha\) then \(o(\kappa)^{V[G]} \geq \alpha\). Though not all Mitchell ranks are preserved (we shall see that if \(o(\kappa)^V = 1\) then \(o(\kappa)^{V[G]} = 0\)), the reduction shall be `minimal': A closure point argument \`a la \cref{prop:hamkins-closure-gives-unlifting} gives us that if \(\calU \in V[G]\) is a normal measure in the forcing extension then \(\calU \cap V \in V\) is a normal measure in \(V\). Hence if \(o(\kappa)^V>0\) then \(o(\kappa)^V=1+o(\kappa)^{V[G]}\) exactly. That is, \(o(\kappa)^V=o(\kappa)^{V[G]}-1\) if \(o(\kappa)^V\) is positive and finite, and otherwise \(o(\kappa)^V=o(\kappa)^{V[G]}\). This operation warrants some ad-hoc notation. For \(\alpha\in\Ord\), let
\begin{equation*}
\premin{\alpha}\defeq\begin{cases}
0&\alpha=0\\
\alpha-1&0<\alpha<\omega\\
\alpha&\omega\leq\alpha.
\end{cases}
\end{equation*}
Our suggested interpretation of this operation is that, given some well-founded relation \(\tup{X,{\prec}}\), we may produce a new relation \(\tup{\premin{X},{\prec}}\) by setting \(\premin{X}\) to be those \(x \in X\) that are not minimal with respect to \({\prec}\). Then if \(\alpha\) is the height of \({\prec}\) on \(X\), \(\premin{\alpha}\) is the height of \({\prec}\) restricted to \(\premin{X}\).

The only other consideration is \(\GCH\). However, this is easy to force while preserving the Mitchell rank of all cardinals, such as with the Easton-support iteration \(\bigast_{o(\kappa)>0}\Add(\kappa^+,1)\).

\begin{thmalph}\label{thm:measurable-class-families}
Let \(V\) be a model of \(\ZFC+\GCH\). Then there is a class-length forcing iteration \(\bbP\) preserving \(\ZFC+\GCH\) such that, if \(G\subseteq\bbP\) is \(V\)-generic, then whenever \(\kappa\) is a measurable cardinal in \(V\) there is a maximal \(\kappa\)-independent family \(\calA\subseteq\power(\kappa)\) in \(V[G]\). Furthermore, whenever \(\kappa\) is a measurable cardinal in \(V\), \(o(\kappa)^V=1+o(\kappa)^{V[G]}\), and whenever \(\kappa\) is non-measurable in \(V\) it remains non-measurable in \(V[G]\).
\end{thmalph}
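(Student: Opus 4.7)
The plan is to take \(\bbP\) to be the Easton-support iteration of \(\Add(\alpha,\alpha^+)\) as \(\alpha\) ranges over the regular non-measurable cardinals of \(V\). Under \(\GCH^V\) this is a standard Easton iteration: cofinalities are preserved, \(\GCH\) is maintained, and for each measurable \(\kappa\) the tail \(\bbP/\bbP_\kappa\) is \(\kappa^+\)-closed, since the next regular non-measurable above \(\kappa\) is \(\kappa^+\).

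For each measurable \(\kappa\) in \(V\) I would obtain the maximal \(\kappa\)-independent family via \cref{prop:iteration-length-kappa}. By well-foundedness of \(\mitchless\), fix a normal measure \(\calU\) on \(\kappa\) with \(o(\calU)^V=0\); by the Mitchell-rank footnote this gives \(\Set{\alpha<\kappa\mid o(\alpha)^V=0}\in\calU\), and intersecting with the measure-one set of regular cardinals below \(\kappa\) yields \(A\cap\kappa\in\calU\), where \(A\) is the class of regular non-measurables. The hypotheses of \cref{prop:iteration-length-kappa} are then satisfied for the iteration \(\bbP_\kappa=\bigast_{\alpha\in A\cap\kappa}\Add(\alpha,\alpha^+)\) and \(\calU\), producing a maximal \(\kappa\)-independent family on \(\kappa\) in \(V[G\res\kappa]\); the \(\kappa^+\)-closure of \(\bbP/\bbP_\kappa\) adds no new subsets of \(\kappa\), so maximality persists in \(V[G]\).

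The Mitchell analysis splits into three pieces. \emph{Non-measurable stays non-measurable}: \(\bbP\) has a closure point at \(\omega_1\) (since \(\bbP_{\omega_1}\cong\Add(\omega,\omega_1)\) has size \(\omega_1\) and the tail is \(\omega_2\)-closed), so by \cref{prop:hamkins-closure-gives-unlifting} every normal measure on \(\kappa>\omega_1\) in \(V[G]\) restricts to a normal measure on \(\kappa\) in \(V\), of which there are none. \emph{Lower bound} \(o(\kappa)^{V[G]}\geq\premin{o(\kappa)^V}\): any \(\calU\in V\) with \(o(\calU)^V>0\) has \(A\cap\kappa\notin\calU\), so the second clause of \cref{prop:iteration-length-kappa} lifts \(\calU\) to \(\hat\calU\in V[G\res\kappa]\), which remains a normal measure in \(V[G]\) by tail closure; Mitchell order is preserved by carrying out the same lift construction inside \(\Ult(V,\calV)\) whenever \(\calU\mitchless\calV\), placing \(\hat\calU\) into \(\Ult(V,\calV)[j_\calV(G)]=\Ult(V[G],\hat\calV)\). \emph{Upper bound} \(o(\kappa)^{V[G]}\leq\premin{o(\kappa)^V}\): the closure point argument reduces every \(V[G]\)-measure on \(\kappa\) to a \(V\)-measure, and lifts of \(\calU\in V\) with \(o(\calU)^V=0\) are excluded by \cref{thm:duality-prime}: there \(A\cap\kappa\in\calU\), and the theorem gives \(\power(\kappa)/\tup{\calU^\ast}^{V[G\res\kappa]}\cong B(\Add(\kappa,\kappa^+))\), a non-atomic quotient, so \(\calU\) has no ultrafilter extension in \(V[G\res\kappa]\) nor (by tail closure) in \(V[G]\).

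The main obstacle is Mitchell-order preservation in the lower bound. The \(\Leftarrow\) direction is an immediate restriction argument (\(\hat\calU\cap V=\calU\)), but \(\Rightarrow\) requires identifying the externally-constructed \(\hat\calU\) with the lift one would obtain by running \cref{prop:iteration-length-kappa}'s procedure inside \(\Ult(V,\calV)\). This amounts to verifying that the auxiliary \(\Ult(V,\calU)\)-generic ideals from the Duality-theorem setup can be canonically read off from \(j_\calV(G)\), and is the step most demanding careful bookkeeping.
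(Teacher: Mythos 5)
Your overall architecture matches the paper's: the same Easton iteration of \(\Add(\alpha,\alpha^+)\) over regular non-measurables, the same application of \cref{prop:iteration-length-kappa} via a Mitchell-minimal measure (so that \(A\cap\kappa\in\calU\)), the same use of tail-closure to preserve maximality, and the same closure-point appeal to \cref{prop:hamkins-closure-gives-unlifting}. The genuine gaps are both in the Mitchell-rank computation, and both stem from trying to transfer the Mitchell \emph{order} itself between \(V\) and \(V[G]\) instead of computing ranks combinatorially. For the upper bound, what you actually establish is that every normal measure \(\calW\in V[G]\) on \(\kappa\) restricts to \(\calW\cap V\in V\) with \(o(\calW\cap V)^V>0\). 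That settles \(o(\kappa)^V\leq1\), but for \(o(\kappa)^V\geq2\) it does not bound the height of \(\mitchless\) in \(V[G]\): an injection of the set of \(V[G]\)-measures into the positive-rank \(V\)-measures says nothing about which \(V[G]\)-measures lie Mitchell-below which, so a priori \(o(\kappa)^{V[G]}\) could still exceed \(\premin{o(\kappa)^V}\). The paper closes this by induction on \(\kappa\) together with the characterisation \(o(\calW)=\alpha\iff\Set{\lambda<\kappa\mid o(\lambda)=\alpha}\in\calW\): taking \(\calW\) with \(o(\calW)^{V[G]}=\premin{o(\kappa)^V}\), the set \(\Set{\lambda<\kappa\mid o(\lambda)^{V[G]}=\premin{o(\kappa)^V}}\) lies in \(\calW\cap V\), the induction hypothesis plus the injectivity of \(\premin{(\cdot)}\) above \(0\) (available because \(o(\calW\cap V)^V>0\) forces \(o(\kappa)^V>1\)) identify it with \(\Set{\lambda<\kappa\mid o(\lambda)^V=o(\kappa)^V}\), and then \(o(\calW\cap V)^V=o(\kappa)^V\) is a contradiction.

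For the lower bound, the step you flag as the main obstacle --- placing the lifted \(\hat\calU\) inside \(\Ult(V[G],\hat\calV)\) by rerunning the generic-filter construction internally --- is avoidable for exactly the same reason: the paper never proves \(\hat\calU\mathrel{\hat{}}\mitchless\hat\calV\) for lifted pairs. It computes \(o(\hat\calU)^{V[G]}\) directly: \(\Set{\lambda<\kappa\mid o(\lambda)^V=o(\calU)^V}\in\calU\subseteq\hat\calU\), and by the induction hypothesis this set is contained in \(\Set{\lambda<\kappa\mid o(\lambda)^{V[G]}=\premin{o(\calU)^V}}\), whence \(o(\hat\calU)^{V[G]}=\premin{o(\calU)^V}\); ranging over \(\calU\) gives the bound. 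You should adopt this route, since the bookkeeping you describe (canonicity of the auxiliary generic \(F\) as computed in two different models) is a real problem rather than an inconvenience. Two smaller points. First, \(\bbP/\bbP_{\omega_1}\) is not \(\omega_2\)-closed --- its first nontrivial iterand is \(\Add(\omega_1,\omega_2)\), which is only \(\omega_1\)-closed --- so the closure-point factorisation should be phrased with a \(\sigma\)-closed tail as in the paper. Second, ``atomless quotient, hence no ultrafilter extension'' is too quick: atomless Boolean algebras have plenty of ultrafilters, so to exclude a normal extension of a rank-\(0\) measure you need the argument of \cref{claim:kappa-minus-a-in-u}, which derives from the definable embedding \(j_\calW\) a subset of \(\kappa\) that is generic over a model containing all of \(\power(\kappa)^{V[G\res\kappa]}\).
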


\begin{proof}
Let us first define our iteration system \(\tup{\bbP_\alpha,\ddbbQ_\alpha\mid\alpha\in\Ord}\). For all regular non-measurable cardinals \(\alpha\), let \(\ddbbQ_\alpha\) be a \(\bbP_\alpha\)-name for \(\Add(\alpha,\alpha^+)\) in the extension, and let \(\ddbbQ_\alpha=\Set{\1}\) otherwise. We iterate this with Easton support: At limit stage \(\alpha\), if \(\alpha\) is regular, let \(\bbP_\alpha\) be the direct limit of all \(\bbP_\beta\) for \(\beta<\alpha\), otherwise let \(\bbP_\alpha\) be the inverse limit. Let \(\bbP\) be the direct limit of all \(\bbP_\alpha\).

Note that for all regular \(\alpha\), \(\bbP=\bbP_\alpha\iter(\bbP/\bbP_\alpha)\), where \(\bbP_\alpha\) is \(\alpha^+\)-c.c.\ and \(\bbP/\bbP_\alpha\) is forced to be \(\alpha\)-closed. By a standard application of forcing techniques we have that \(\bbP\) is tame and thus will preserve \(\ZFC\).\footnote{\cite[Ch.~15]{jech_set_2003} provides a comprehensive overview of preservation of \(\ZFC\) using class products. \cite{friedman_fine_2000} has a deep treatment of class length forcing iterations.} For measurable \(\kappa\), we also have that \(\bbP=\bbP_\kappa\iter(\bbP/\bbP_\kappa)\) with \(\bbP/\bbP_\kappa\) forced to be \(\kappa^+\)-complete. Therefore, if \(\bbP_\kappa\) adds a maximal \(\kappa\)-independent family \(\calA\subseteq\power(\kappa)\) then, after forcing with \(\bbP/\bbP_\kappa\), \(\calA\) will still be maximal \(\kappa\)-independent. It therefore remains to show that \(\bbP_\kappa\) does indeed add such a family, in the manner of \cref{prop:iteration-length-kappa}. However, after taking care to pick a measure \(\calU \in V\) with \(o(\calU)^V=0\), we may apply \cref{prop:iteration-length-kappa} without modification. In this case, \(A = \Set{\lambda \in V \mid o(\lambda)^V > 0}\), so \(A \cap \kappa \in \calU\).

The rest of the proof will be spent showing that, for all \(\kappa\), \(o(\kappa)^{V[G]}=\premin{o(\kappa)^V}\). Let us begin by noting that for all \(\kappa\), \(\bbP/\bbP_{\kappa^+}\) is \(\kappa\)-closed, and \(\bbP_{\kappa^+}=\bbP_\omega\iter(\bbP_{\kappa^+}/\bbP_\omega)\), where \(\bbP_\omega=\Add(\omega,\omega_1)\) and \(\bbP_{\kappa^+}/\bbP_\omega\) is \(\sigma\)-closed. That is, \(\bbP_{\kappa^+}\) has a closure point at \(\omega\). Therefore, if \(\calU \in V[G\res\kappa^+]\) is a normal measure on \(\kappa\) then, by \cref{prop:hamkins-closure-gives-unlifting}, \(\calU \cap V \in V\) is a normal measure on \(\kappa\) in \(V\). Since \(\bbP/\bbP_{\kappa^+}\) is \(\kappa^+\)-closed, any normal measure on \(\kappa\) in \(V[G]\) must have already been present in \(V[G\res\kappa^+]=V[G\res\kappa]\).\footnote{Having found out that \(\kappa\) is measurable in \(V\), we conclude that \(\ddbbQ_\kappa=\Set{\1}^\bullet\).} In particular, if \(o(\kappa)^V = 0\) then \(o(\kappa)^{V[G]} = 0\). Having established this, the following claim will be helpful for our lifting arguments.

\begin{claim}\label{claim:kappa-minus-a-in-u}
If \(\calU \in V[G]\) is a normal measure on \(\kappa\), then
\begin{equation*}
\kappa \setminus A = \Set{\lambda < \kappa \mid o(\lambda)^V > 0} \in \calU.
\end{equation*}
\end{claim}

\begin{poc}
By prior calculations let us work in \(V[G\res\kappa]\) and let
\begin{equation*}
j = j_\calU \colon V[G\res\kappa] \to N=M[j(G\res\kappa)]
\end{equation*}
be the associated ultrapower embedding. Note that
\begin{equation*}
j(\bbP_\kappa)=\bigast_{\alpha\in j(A\cap\kappa)}\Add(\alpha,(\alpha^+)^N)^N.
\end{equation*}
By the \(\kappa^+\)-closure of \(N\) in \(V[G\res\kappa]\), if \(\kappa\in j(A)\) then \(\Add(\kappa,\kappa^+)^{V[G\res\kappa]}\) is an iterand of \(j(\bbP_\kappa)\) and we can extract from \(j(G)\) a \(V[G\res\kappa]\)-generic filter for \(\Add(\kappa,\kappa^+)^{V[G\res\kappa]}\). However, \(j\) is definable in \(V[G\res\kappa]\) and so certainly such an object cannot exist in \(V[G\res\kappa]\). Hence, \(\kappa\notin j(A)\) and so \(\kappa\setminus A\in \calU\).
\end{poc}

The rest of the proof shall be spent showing the exact Mitchell ranks of cardinals in \(V[G]\): For all \(\kappa\), \(o(\kappa)^{V[G]}=\premin{o(\kappa)^V}\). We shall do this by induction, so suppose that for all \(\lambda<\kappa\), \(o(\lambda)^{V[G]}=\premin{o(\lambda)^V}\). As we have shown that \(o(\kappa)^V=0\) implies that \(o(\kappa)^{V[G]}=0\), let us assume that \(o(\kappa)^V>0\).

\((o(\kappa)^{V[G]}\leq\premin{o(\kappa)^V})\). Suppose that \(o(\kappa)^{V[G]}>\premin{o(\kappa)^V}\), witnessed by normal measure \(\calU \in V[G]\) such that \(o(\calU)^{V[G]} = \premin{o(\kappa)^V}\). By \cref{claim:kappa-minus-a-in-u}, \(\kappa \setminus A \in \calU \cap V\), and hence \(o(\calU\cap V)^V > 0\) and \(o(\kappa)^V > 1\). In particular, for any \(\alpha\), if \(\premin{\alpha}=\premin{o(\kappa)^V}\) then \(\alpha=o(\kappa)^V\). Therefore,
\begin{align*}
\Set{\lambda < \kappa \mid o(\lambda)^{V[G]} = \premin{o(\kappa)^V}}&=\Set{\lambda < \kappa \mid \premin{o(\lambda)^V} = \premin{o(\kappa)^V}}\\
&=\Set{\lambda < \kappa \mid o(\lambda)^V = o(\kappa)^V}\\
&\in\calU \cap V,
\end{align*}
and so \(o(\calU \cap V)^V = o(\kappa)^V\), a contradiction.

\((o(\kappa)^{V[G]}\geq\premin{o(\kappa)^V})\). By \cref{prop:iteration-length-kappa}, if \(\calU \in V\) is a normal measure on \(\kappa\) such that \(A \cap \kappa \notin \calU\) (i.e.\ \(o(\calU)^V>0\)), there is \(\hat\calU \supseteq \calU\) a normal measure on \(\kappa\) in \(V[G\res\kappa]\). Furthermore, since \(\bbP/\bbP_\kappa\) is \(\kappa^+\)-closed, \(\hat\calU\) is still a normal measure on \(\kappa\) in \(V[G]\). Since \(o(\kappa)^{V[G]}\geq0\) by definition, let us assume that \(o(\kappa)^V > 1\) and prove that \(o(\kappa)^{V[G]} \geq \premin{o(\kappa)^V}\). If \(\calU \in V\) is such that \(o(\calU)^V > 0\) then
\begin{equation*}
\Set{\lambda < \kappa \mid o(\lambda)^V = o(\calU)^V}=\Set{\lambda< \kappa \mid o(\lambda)^{V[G]} = \premin{o(\calU)^V}}\in \hat\calU.
\end{equation*}
Hence, for all \(\alpha<o(\kappa)^V\), \(\premin{\alpha} < o(\kappa)^{V[G]}\), so \(\premin{o(\kappa)^V} \leq o(\kappa)^{V[G]}\) as required.
\end{proof}

Note that this result on the Mitchell rank may not be reversible. Let \(\calU,\calV\in V\) be any two normal measures on some \(\kappa\) with \(o(\kappa)^V=1\), and \(A\in\calU\setminus\calV\) a set of regular cardinals. Then forcing with the Easton-support iteration \(\bigast_{\alpha\in A}\Add(\alpha,\alpha^+)\) will produce a maximal \(\kappa\)-independent family \(\calA\subseteq\power(\kappa)\) thanks to \(\calU\), but \(\hat\calV\) will witness that \(\kappa\) is measurable in the forcing extension. However, there need not be an inner model witnessing \(o(\kappa)>1\). On the other hand, if there is a normal measure \(\calU\) on \(\kappa\) such that
\begin{equation*}
A=\Set*{\lambda<\kappa\mid(\exists\calA\subseteq\power(\lambda))\calA\text{ is maximal $\lambda$-independent}}\in\calU,
\end{equation*}
then it seems likely that \(o(\kappa)=2\) in the model \(L[\tup{\calI_\lambda\mid\lambda\in A},\calI_\kappa,\calU]\), where \(\calI_\lambda\) is the \(\lambda^+\)-saturated, \(\lambda\)-complete ideal on \(\lambda\) given by \cref{thm:family-gives-ideal} (see \cref{qn:mitchell-order-inner-model}).

\section{The future}\label{s:future}

Kunen's equiconsistency of measurable cardinals with maximal \(\sigma\)-independent families opens up an exciting correspondence between the consistency strength of large cardinals and corresponding collections of maximal \(\theta\)-independent families, either as a study in its own right or as an avenue to analyse other consistency strength relationships (such as determinacy or forcing axioms).

\begin{qn}
Can we extend the methods of \cref{thm:measurable-class-families} and \cref{prop:iteration-length-kappa} to produce a proper class of maximal \(\mu\)-independent families when \(\kappa\) is \(\mu\)-strongly compact but not \(\mu^+\)-strongly compact? If \(\kappa\) is strongly compact then can we obtain a model in which there is a proper class of maximal \(\kappa\)-independent families?
\end{qn}

\begin{qn}
Can the technique of \cref{thm:kunens-theorem-applicable} be extended to general elementary embeddings, rather than ultrapower embeddings? If so, can we soften the requirements of \cref{thm:main-sc} to only requiring, say, a \(\theta\)-strong cardinal?
\end{qn}

\begin{qn}
What is the consistency strength of a proper class of maximal \(\sigma\)-independent families? Of a proper class of \(\theta\) such that there is a maximal \(\theta\)-independent family? Of a proper class of \(\theta\) such that there is a maximal \(\theta\)-independent family on \(\theta\)?
\end{qn}

Our hope would be that one could investigate these questions without needing to develop an inner model theory for strongly compact cardinals, which would pose a non-trivial obstacle. Similarly, we would like to know more about \(\theta\)-strongly compact cardinals and where they fall in the large cardinal hierarchy.

\begin{qn}
Is the consistency strength of a \(\theta\)-strongly compact cardinal strictly lower than that of a strongly compact cardinal?
\end{qn}

\begin{qn}
Is the least \(\theta\)-strongly compact cardinal a strong limit? In \cite[Proposition~6.2]{gitik_sigma_2020} Gitik constructs (from \(\kappa<\lambda\) supercompact plus \(\GCH\)) a cofinality-preserving forcing extensions such that \(2^\kappa=\lambda\) and \(\lambda\) is \(\kappa\)-strongly compact. Is \(\lambda\) (consistently) the least \(\kappa\)-strongly compact cardinal in this model?
\end{qn}

\begin{qn}\label{qn:mitchell-order-inner-model}
When can we recover Mitchel order from models with many \(\kappa\) such that there is a maximal \(\kappa\)-independent family on \(\kappa\)? For example, is it sufficient to have \(\calA\in\Ult(V,\calU)\) to produce an inner model in which \(o(\kappa)\geq2\)?
\end{qn}

\section{Acknowledgements}\label{s:acknowledgements}

The author would like to thank Gabe Goldberg for his help with the proof of \(2^\lambda\leq\abs{\power(D)^M}\) in the proof of \cref{thm:main-sc}. The author would like to thank Andrew Brooke-Taylor and Asaf Karagila for their comments on early versions of this paper.

\providecommand{\bysame}{\leavevmode\hbox to3em{\hrulefill}\thinspace}
\providecommand{\MR}{\relax\ifhmode\unskip\space\fi MR }
\providecommand{\MRhref}[2]{%
  \href{http://www.ams.org/mathscinet-getitem?mr=#1}{#2}
}
\providecommand{\href}[2]{#2}

\end{document}